\newtheorem{defn}{Definition}[section]
\newtheorem{thm}{Theorem}[section]
\newtheorem{prop}[thm]{Proposition}
\newtheorem{cor}[thm]{Corollary}
\newtheorem{lem}[thm]{Lemma}
\newtheorem{remark}[thm]{Remark}
\newtheorem{example}[thm]{Example}
\crefname{lem}{Lemma}{lemma}
\crefname{remark}{Remark}{remark}
\crefname{cor}{Corollary}{corollary}
\crefname{thm}{Theorem}{theorem}
\crefname{prop}{Proposition}{proposition}
\crefname{example}{Example}{example}
\crefname{defn}{Definition}{definition}
\crefname{notation}{Notation}{notation}
\crefname{appendix}{Appendix}{appendix}
\crefname{section}{Section}{section}
\newcommand{\TTT}{\mathcal T}
\newcommand{\NNN}{\mathcal N}
\newcommand{\OOO}{\mathcal O}
\newcommand{\PPP}{\mathcal P}
\newcommand\Zb {\mathbb{Z}}
\newcommand{\F}{\mathscr {F}}
\newcommand\CA {\EuScript{A}}
\newcommand\CB {\EuScript{B}}
\newcommand\CC {\EuScript{C}}
\newcommand\CH {\EuScript{H}}
\newcommand\CI {\EuScript{I}}
\newcommand\CK {\EuScript{K}}
\newcommand\CL {\EuScript{L}}
\newcommand\CS {\EuScript{S}}
\newcommand{\FB} {\mathfrak{B}}
\DeclareMathSymbol{\shortminus}{\mathbin}{AMSa}{"39}
\DeclareMathOperator{\ob}{Ob}
\DeclareMathOperator{\Ker}{Ker}
\DeclareMathOperator{\Aut}{Aut}
\DeclareDocumentCommand{\lin}{m O{\cdot} O{\cdot}}{{}_{{#1}{ }} \langle #2, #3 \rangle}
\DeclareDocumentCommand{\rin}{m O{\cdot} O{\cdot}}{\langle #2, #3 \rangle_{{#1}{}} }
\DeclareDocumentCommand{\ot}{m}{\otimes_{{#1}{}} }
\DeclareDocumentCommand{\od}{m}{\odot_{{#1}{}} }
\DeclarePairedDelimiterX\braket[2]{\langle}{\rangle}{#1 \delimsize\vert #2}
\newcommand{\titleinfo}{ Cuntz-Nica-Pimsner algebras associated to product systems over quasi-lattice ordered groupoids}
\newcommand{\titleinfoshort}{ Cuntz-Nica-Pimsner algebras}
\newcommand{\authorinfo}{Feifei Miao, Liguang Wang, Wei Yuan}
\begin{document}

\title{\LARGE\textbf{\titleinfo}}
\author{\large\textsc{\authorinfo}}

\address{School of Mathematical Sciences, Qufu Normal University, Qufu, Shandong, 273165, China}
\email{mff100511@163.com }

\address{School of Mathematical Sciences, Qufu Normal University, Qufu, Shandong, 273165, China}
\email{wangliguang0510@163.com}

\address{AMSS, Chinese Academy of Sciences, Beijing, 100190,  China\\
and\\
School of Mathematical Sciences\\
University of Chinese Academy of Sciences, Beijing 100049, China}
\email{wyuan@math.ac.cn}

\begin{abstract}
We characterize Cuntz-Nica-Pimsner algebras for compactly aligned product systems over quasi-lattice ordered groupoids. We show that the full cross sectional $C^*$-algebras of Fell bundles of Morita equivalence bimodules are isomorphic to the related Cuntz-Nica-Pimsner algebras under certain conditions.
\end{abstract}

\subjclass[2010]{46L05}
\keywords{Cuntz-Nica-Pimsner algebras; Product systems; Quasi-lattice ordered groupoids.}
 \thanks{Wang was supported in part by NSF of China (No. 11871303, No. 11971463, No. 11671133) and NSF of Shandong Province (No. ZR2019MA039, No. ZR2020MA008).
 Yuan was supported in part by NSF of China (No. 11871303, No. 11871127, No. 11971463).
 }
\date{}
\maketitle

\baselineskip=18pt \vskip12pt

\section{Introduction}

 A class of $C^*$-algebras later called Cuntz-Krieger algebras were introduced by Cuntz and Krieger in \cite{MR467330} and \cite{MR561974}.  Since then, there have been many  generalizations of  Cuntz-Krieger algebras. In \cite{MR1426840},  Pimsner studied  a class of  $C^*$-algebras associated to C$^*$-correspondences which also generalized the Cuntz-Krieger algebras. Pimsner's work was refined by Katsura \cite{MR2102572}. The C$^{\ast}$-algebras constructed by Katsura were later called  Cuntz-Pimsner algebras.

From another perspective, Fowler generalized Pimsner's work and studied $C^*$-algebras for regular product systems of C$^{\ast}$-correspondences over directed quasi-lattice ordered groups \cite{FowlerN2002}.
Based on the work of Nica \cite{MR1241114} and of Laca and Raeburn \cite{MR1402771}, Folwer introduced the notions of Nica covariant Toeplitz representations of product systems and Cuntz-Pimsner covariant Toeplitz representations of compactly aligned product systems. Moreover, he described Nica-Toeplitz algebras which  are universal for Nica covariant Toeplitz representations, and  Cuntz-Pimsner algebras which are universal for Cuntz-Pimsner covariant Toeplitz representations respectively.
However, it was unclear what the right analogue of the Cuntz algebras for product systems  of non-injective $C^*$-correspondences should be and this was left open for quite some time.  
 Eventually, Sims and Yeend  described Cuntz-Nica-Pimsner algebras  for some  product systems  over quasi-lattice ordered groups under certain conditions \cite{Sims2010}.  Cuntz-Nica-Pimsner algebras are universal for Cuntz-Nica-Pimsner covariant representations which are Nica covariant in the sense of Fowler, and satisfy Cuntz-Pimsner covariant  conditions which are  generalizations of Fowler's Cuntz-Pimsner covariant conditions.

There is also another  version of Cuntz-Pimsner algebras which are co-universal. This class of C$^{\ast}$-algebras were introduced by  Carlsen, Larsen, Sims  and Vittadello in \cite{MR2837016} and are co-universal for injective, Nica covariant, gauge-compatible representations of product systems under certain conditions.   It was shown in \cite{MR2837016}  that the co-universal Cuntz-Pimsner algebras and the Cuntz-Nica-Pimsner algebras of some product systems are generalizations of  reduced and full  crossed products of $C^*$-algebras by groups respectively.  In further work, Dor-On, Kakariadis, Katsoulis, Laca, and Li \cite{Li} generalized  Dor-On and Katsoulis' results \cite{MR4053621}  and  showed that there exist  co-universal algebras of compactly aligned product systems over group-embeddable right LCM semigroups.
Motivated by the above results, we introduced  the notion of product systems over quasi-lattice ordered groupoids in \cite{Feifei}. And we  showed that the co-universal algebras  of  compactly aligned product systems over quasi-lattice ordered groupoids do exist.

In this paper, we will   describe  Cuntz-Nica-Pimsner algebras for some compactly aligned product systems over quasi-lattice ordered groupoids which are universal for injective Cuntz-Nica-Pimsner covariant representations.

This paper is organized as follows. In section 2, we recall the terminologies and basic results which will be used later. In section 3, we introduce the Cuntz-Pimsner condition for Toeplitz representations and study the Cuntz-Nica-Pimsner representations for compactly aligned product systems over quasi-lattice ordered groupoids. In section 4, we show that 
 the full cross sectional $C^*$-algebra of the Fell bundle is isomorphic to the Cuntz-Nica-Pimsner algebra of the associated product system
when the fibers of a Fell bundle over a quasi-lattice ordered groupoid  are all Morita equivalence bimodules.

\section{Preliminaries}

\subsection{$C^{\ast}$-correspondences}

Let $\CB$ be a C$^*$-algebra and $X$ be a right Hilbert $\CB$-module (see \cite{L95}). We use $\CL(X)$ (resp. $\CK(X)$) to denote the C$^*$-algebra of all adjointable (resp. compact) operators on $X$.



 \begin{defn}
For $C^{\ast}$-algebras $\CA$ and $\CB$, a $\CA$-$\CB$ $C^{\ast}$-correspondence is a right Hilbert $\CB$-module $X$ together with a left $\CA$-action induced by a non-degenerate $*$-homomorphism $\phi : \CA \to \CL(X)$, i.e. $ \CA \cdot X := \overline{\rm{span}}\{ \phi(a)\xi: a \in \CA, \xi \in X\} = X$.
   \end{defn}

To simplify notation, we will write $\phi(a) \xi $ as $a \xi$ in the following.

    \begin{defn}
    A bimodule map $F:X \to Y$ between two $\CA$-$\CB$ C$^*$-correspondences $X$ and $Y$  is called a \textit{C$^*$-correspondence map} if $$\rin{\CB}[\xi][\beta] = \rin{\CB}[F(\xi)][F(\beta)]$$ for all $\xi$, $\beta \in X$. A $C^*$-correspondence map is called a \textit{C$^*$-correspondence isomorphism} if it is a unitary.
    \end{defn}

\begin{remark}
    A C$^*$-correspondence map  between two $\CA$-$\CB$ C$^*$-correspondences $X$ and $Y$ is not adjointable in general. For example, let $\Omega$ be a compact Hausdorff space and $\Omega_0$ be a non-empty closed subset of $\Omega$ whose complement is dense in $\Omega$. Then $$X := \{ f \in C(\Omega): f(\Omega_0) = 0\}$$ is a non-unital C$^*$-subalgebra of $C(\Omega)$. It is clear that $X$ and $C(\Omega)$ are both $C(\Omega)$-$C(\Omega)$ C$^*$-correspondences with the canonical structure. And the inclusion map $F: X \to Y$ is a C$^*$-correspondence map. But $F$ is not adjointable since $F^*$, if exists, must map $1_{C(\Omega)}$ to the unit of $X$.
\end{remark}

Let $X$ be a $\CA$-$\CB$ C$^{\ast}$-correspondence and $Y$ be a $\CB$-$\CC$ $C^*$-correspondence, where $\CA$, $\CB$, and $\CC$ are C$^{\ast}$-algebras. The $\CB$-balanced tensor product $X \od{\CB} Y$ together with right $\CC$-action
 \begin{align*}
 (\xi \odot \eta) \cdot c = \xi \odot \eta \cdot c
 \end{align*}
 and $\CC$-valued inner product
  \begin{align*}
        \rin{\CC}[\xi_1 \odot \eta_1][\xi_2 \odot \eta_2]:=  \rin{\CC}[\eta_1][\rin{\CB}[\xi_1][\xi_2]\eta_2]
    \end{align*}
 for $\xi,  \xi_1, \xi_2 \in X$, $\eta, \eta_1, \eta_2 \in Y$,
 is a pre-Hilbert right $\CC$-module. We use $X \ot{\CB} Y$ to denote the completion of $X \od{\CB} Y$. Then $X \ot{\CB} Y$ is a $\CA$-$\CC$ C$^*$-correspondence.

\begin{defn} A $\CA$-$\CB$ $C^*$-correspondence $X$ is called  a  Hilbert $C^*$ $\CA$-$\CB$-bimodule if $X$ equipped with a left $\CA$-valued inner product $\lin{\CA}$ such that the left $\CA$-module $X$ together with $\lin{\CA}$ is a left Hilbert $\CA$-module and
\begin{align*}
\lin{\CA}[\xi][\eta] \zeta = \xi \rin{\CB}[\eta][\zeta], \quad \forall \xi, \eta, \zeta \in X.
\end{align*}
If, moreover, $\lin{\CA}[X][X]$ and $\rin{\CB}[X][X]$ are dense in $\CA$ and $\CB$ respectively, then $X$ is called a Morita equivalence bimodule.
\end{defn}


\subsection{Product systems over quasi-lattice ordered groupoids}

Let $G$ be a small category. We use $\ob(G)$ to denote the set of objects in $G$ and $G(x,y)$ to denote the set of morphisms in $G$ with source $x$ and target $y$. Let $G(-, -)$ be the set of morphisms in $G$. For every $x, y \in \ob(G)$, we use $G(x, -)$ (resp. $G(-, y)$) to denote the set of morphisms in $G$ with source $x$ (resp. target $y$). For $f \in G(-, -)$, we use $s(f)$ (resp. $t(f)$) to denote the source (resp. target) of $f$. For $x \in \ob(G)$, we use $1_x$ to denote the identity morphism of $x$. 
A discrete groupoid is a category in which every morphism is invertible (We refer the reader to \cite{RS} for a basic introduction of category theory).

\begin{defn}[\cite{Feifei}]
 A quasi-lattice ordered groupoid is a pair $(G, P)$, where $G$ is a discrete groupoid and $P$ is a wide subcategory of $G$, i.e., $\ob(P) = \ob(G)$, such that
\begin{enumerate}
    \item for every $x, y \in \ob(P)$,
   \begin{align*}
     P(x,y) \cap P^{-1}(x,y) =
     \begin{cases}
         \emptyset,& \mbox{if $x \neq y$;}\\
          1_x, & \mbox{if $x = y$,}
      \end{cases}
    \end{align*}
    where $P^{-1}(x,y) := \{f^{-1}: f \in P(y,x)\}$;

    \item under the partial order $f \leq g \iff f ^{-1}g \in P$, $f, g \in G(-, -)$,  every finite set of morphisms in $G$ with an upper bound in $P$ has a least upper bound in $P$.
\end{enumerate}
\end{defn}

\begin{example}[\cite{Feifei}]Let $(G', P')$ be a quasi-lattice ordered group.
 \begin{enumerate}
     \item Note that $G'$ is a groupoid with only one object and $(G', P')$ is a quasi-lattice ordered groupoid.
     \item Let $X$ be a $G'$-set, $G'$ acts on $X$. The transformation groupoid $G$ is the category with $\ob(G) = X$ and $G(x, y) = \{g\in G': g \cdot x =y\}$ (see example 8.1.15 in \cite{Sims20101}). Let $P$ be the wide subcategory of $G$ such that $P(x, y) = \{ g \in P': g \cdot x =y\}$.
Recall that $P' \cap P'^{-1} = \{ e\}$, where $e$ is the unit of $G'$.
 Then $(G, P)$ is a quasi-lattice ordered groupoid.
     \end{enumerate}
     \end{example}

For the rest of this paper, we assume that  $(G, P)$ is a quasi-lattice ordered groupoid. For $f, g \in G(-, -)$, if they have a common upper bound in $P$, we use $f\vee g$ or $\vee\{f,g\}$ to denote their least upper bound.  Otherwise, we  write $f\lor g=\infty$.





\begin{defn}\label{def:_product_sys}
A product system over (G, P) is a triple $$(\{\CA_{x}\}_{x \in Ob(P)}, \{X_{f}\}_{f \in P(-,-)}, \{m_{g, f}\}_{\{(g, f) \in P(-, -): t(f) = s(g)\}}),$$ where $\CA_x$ is a  C$^*$-algebra, $X_{f}$ is a $\CA_{t(f)}$-$\CA_{s(f)}$ C$^*$-correspondence, and $m_{g, f}$ is a $C^*$-correspondence isomorphism from $X_g \ot{\CA_{t(f)}} X_f$ to $X_{gf}$, subject to the following axioms:
\begin{enumerate}
    \item $X_{1_x} = \CA_x$ for every $x \in \ob(P)$;
    \item $m_{1_x, f}(a \otimes_{\CA_x} \xi) = a \xi$ and $m_{f, 1_w}(\xi \otimes_{\CA_w} b) = \xi b$ for $f \in P(w,x)$, $a \in \CA_x$, $b \in \CA_w$, $\xi \in X_f$;
    \item for every $f \in P(w, x)$, $g \in P(x,y)$, $h \in P(y,z)$, the following diagram

    \begin{align*}
        \xymatrix @R=0.2in @C=0.6in{
            X_h \ot{\CA_y} X_g\ot{\CA_{x}} X_f \ar[d]^{I_{X_h} \ot{\CA_y} m_{g,f}}
            \ar[r]^-{m_{h,g} \ot{\CA_{x}} I_{X_f}} & X_{hg} \ot{\CA_{x}} X_f \ar[d]^{m_{hg,f}}\\
            X_h \ot{\CA_y}  X_{gf} \ar[r]^{m_{h,gf}} & X_{hgf}
        }
    \end{align*}
 commutes.

\end{enumerate}
\end{defn}

In the following, we will write$(\{\CA_{x}\}_{x \in Ob(P)}, \{X_{f}\}_{f \in P(-,-)}, \{m_{g, f}\})$ as  $(\{\CA_{x}\}, \{X_{f}\})$ and write  $m_{g, f}(\xi_g \otimes_{\CA_s(f)} \eta_f)$ as $\xi_g \eta_f$.

For $f, g \in P(-, -)$, there is a $*$-homomorphism  $l^g_f : \CL(X_f) \to \CL(X_g)$  defined by
\begin{align*}
l^{g}_f(A) :=\begin{cases}
     m_{f, f^{-1}g}(A \otimes_{\CA_{s(f)}} I) m^*_{f, f^{-1}g}, & \text{ if} \quad f \leq g; \\
      0, & \text{otherwise},
\end{cases}
\end{align*}
where $A \in \CL(X_f)$. By identifying $\CK(X_{1_{t(g)}})$ with $\CA_{t(g)}$, we have $ l^g_{1_{t(g)}}(a)(\xi_g) = a \xi_g$ by condition (2) in \cref{def:_product_sys},  for $\xi_g \in X_g$, $a \in \CA_{t(g)}$.

\begin{defn}
A product system $X = (\{\CA_{x}\}, \{X_{f}\})$ over $(G, P)$ is called \textit{compactly aligned} if $S \vee T := l^{f\vee g}_f(S) l^{f \vee g}_g(T)$ is compact, i.e., $S \vee T  \in \CK(X_{f \vee g})$, for every $S \in \CK(X_f)$, $T \in \CK(X_g)$, and $f, g \in P(-, -)$ such that $f \vee g < \infty$.
 \end{defn}


\subsection{Representations of a product system over a quasi-lattice ordered groupoid}\


\begin{defn}\label{def:iso_rep_comp_alig_prod_system}
    Let $(\{\CA_{x}\}, \{X_{f}\})$ be a product system over $(G, P)$. A  Toeplitz \textit{representation} of $(\{\CA_{x}\}, \{X_{f}\})$ is a family of linear maps $\{\psi_f : X_f \to \CB\}_{f \in P(-, -)}$, where $\CB$ is a C$^*$-algebra, such that
    \begin{enumerate}
        \item $\psi_{1_x} : \CA_x \to \CB$ is a $*$-homomorphism for every $x \in \ob(P)$;
        \item $\psi_g(\beta) \psi_f(\xi) = \psi_{gf}(\beta \xi)$ for every $\beta \in X_g$, $\xi \in X_f$, $g$, $f \in P(-, -)$, $s(g) = t(f)$;
        \item $\psi_f(\eta)^*\psi_f(\xi) = \psi_{1_{s(f)}} (\rin{\CA_{s(f)}}[\eta][\xi])$ for  $\xi, \eta \in X_f$, $f \in P(-,-)$.
    \end{enumerate}
\end{defn}

Let $\psi = \{\psi_f : X_f \to \CB \}_{f \in P(-, -)}$ be a Toeplitz representation of $(\{\CA_{x}\}, \{X_{f}\})$. The condition (3) in \cref{def:iso_rep_comp_alig_prod_system} implies that $\psi_f$ is a contraction, for every $ f \in P(-, -)$. And $\psi_f$ is isometric if $\psi_{1_{s(f)}}$ is injective. We say $\psi$ is injective if $\psi_{1_x}$ is injective for all $x \in \ob(P)$.

From now on, we always assume that $X = (\{\CA_{x}\}, \{X_{f}\})$ is a compactly aligned product system over $(G, P)$ unless otherwise specified. For a Toeplitz representation $\psi = \{ \psi_f: X_f\to \CB\}_{f \in P(-, -)}$ of $X$, there are $*$-homomorphisms $\psi^{(f)} : \CK(X_f) \to \CB$ such that $\psi^{(f)}(\theta_{\xi, \eta}) = \psi_f(\xi) \psi_f(\eta)^*$, where $\theta_{\xi, \eta}(-)$ is the rank one operator $\xi \rin{\CA_{s(f)}}[\eta][-]$ in $\CK(X_f)$(see \cite[p. 202]{MR1426840},  \cite[Lemma 2.2]{MR1658088} and \cite[Remark 1.7]{MR1722197}). In the following, we adopt the convention that $\psi^\infty \equiv 0$.

\begin{defn}
A Toeplitz representation $\psi= \{ \psi_f\}_{f \in P(-, -)}$ of $(\{\CA_{x}\}, \{X_{f}\})$ is called \textit{Nica covariant} if
\begin{align*}
   \psi^{(f)}(S)\psi^{(g)}(T) =  \psi^{(f \vee g)}(S \vee T),
\end{align*}
for every $f, g \in P(-, -)$, $S \in \CK(X_f)$ and $T \in \CK(X_g)$.
\end{defn}

\begin{remark}[see \cite{Feifei}]\label{rem:_some_p_of_N}
Let $\psi$ be a Nica covariant Toeplitz representation of $(\{ \CA_x\}, \{ X_f\})$. For $f , g \in P(-, -)$, $\xi_f \in X_f$, $\eta_g \in X_g$,
\begin{enumerate}
\item if  $s(f) \neq t(g)$, then  $\psi_f (\xi_f) \psi_g(\eta_g) = 0$;

\item if $f \vee g < \infty $, then $\psi_f(\xi_f)^*\psi_g(\eta_g) \in   \overline{  \rm{span}}\{ \psi_{f^{-1}(f \vee g)}(\xi') \psi_{g^{-1}(f\vee g)}(\eta')^* : \xi' \in X_{f^{-1}(f \vee g)}, \eta' \in X_{g^{-1}(f \vee g)}\}$. Otherwise, $\psi_f(\xi_f)^*\psi_g(\eta_g) = 0$.
\end{enumerate}
\end{remark}

Let $\psi = \{ \psi_f\}_{f \in P(-, -)}$ be a Nica covariant Toeplitz representation of $X$. In the following, we use $C^*(\psi)$  to denote the $C^*$-algebra generated by the image of $\psi$.
It follows from  \cref{rem:_some_p_of_N} that
\begin{align}\label{Nica_covariant_eq}
C^*(\psi) = \overline{\mbox{span}}\{ \psi_f(\xi_f) \psi_g(\eta_g)^* : f, g \in P(-, -), \xi_f \in X_f, \eta_g \in X_g \}.
\end{align}

\begin{example}[The Fock representation] \label{Sub:_fock}
Let $X = (\{\CA_{x}\}, \{X_{f}\})$ be a compactly aligned product system over $(G, P)$. We use $C_c(X)$ to denote the set of maps $$\xi: P(-,-) \to \cup_{f \in P(-,-)} X_f$$ with finite support such that  $\xi(f) \in X_f$. Similarly, $C_c(\{\CA_x\}_{x \in \ob(P)})$ is the set of maps $\beta: \ob(P) \to \cup_{x \in \ob(P)} \CA_x$ with finite support such that $\beta(x) \in \CA_x$.
Let $C_0(\{\CA_x\})$ be the completion of $C_c(\{\CA_x\})$ with respect to the supremum norm. Note that $C_0(\{\CA_x\})$ is a C$^*$-algebra, and $C_c(X)$ together with the right $C_0(\{\CA_x\})$-action and the $C_0(\{\CA_x\})$-valued inner product
    \begin{align*}
        [\xi \cdot \beta](f) := \xi(f) \cdot \beta(s(f)),\quad
        \rin{C_0(\{\CA_x\})}[\xi][\eta](x) :=\sum_{f \in P(x, -)} \rin{\CA_{x}}[\xi(f)][\eta(f)],
    \end{align*}
is a pre-Hilbert right module over $C_0(\{\CA_x\})$. The Fock space $\F(X)$ is the completion of $C_c(X)$. The \textit{Fock representation}  $\{\eta_g \mapsto L_{\eta_g}: X_g \to \CL(\F(X))\}_{g \in P(-,-)}$ of $X$ is  defined by
\begin{align*}
    (L_{\eta_g}\xi)(f) =
    \begin{cases}
        \eta_g \xi(g^{-1}f), & g \leq f;\\
        0, & \mbox{otherwise}.
    \end{cases}
\end{align*}
The Fock representation is a Nica covariant Toeplitz representation of $X$. And the \textit{Fock algebra} $\TTT^r_X$ is the $C^*$-algebra generated by the image of the Fock representation of $X$ (see \cite{Feifei} for more details).
\end{example}

The Nica-Toeplitz algebra for $X$ is a C$^*$-algebra $\NNN\TTT_X$ equipped with a Nica covariant Toeplitz representation $i = \{ i_f: X_f \to \NNN\TTT_X\}_{f \in P(-, -)}$ such that
\begin{enumerate}
    \item $\NNN\TTT_X = C^*(i)$,
    \item $i$ is universal in the sense that
for any other  Nica covariant Toeplitz representation $\psi : X \to \CB$, there is a unique $*$-homomorphism $\psi' : \NNN\TTT_X \to \CB$ rendering the following diagram
   \begin{align*}
\xymatrix @R=0.2in {
   &  X \ar[ld]_{\psi} \ar[rd]^-{i} &  \\
  \CB & &\NNN\TTT_X \ar[ll]_-{\psi'}
   }
\end{align*}
commutative.
\end{enumerate}

\subsection{The full cross sectional $C^*$-algebra  $C^*(\FB)$ of a Fell bundle $\FB$ }\

Let $\{\FB_f\}_{f \in G(-, -)}$ be a Banach bundle over discrete groupoid $G$ (see \cite[Definition 13.4]{MR936628}). We  write  $\FB_{1_x}$ as  $\FB_x$ for convenience. A multiplication on $\{\FB_f\}_{f \in G(-, -)}$ is a family of continuous bilinear maps
\begin{align*}
  \{\FB_f \times \FB_g \to \FB_{fg}\}_{(f,g) \in G(-, -)^2, s(f) = t(g)}
\end{align*}
such that
\begin{enumerate}
    \item $(\xi_f \eta_g) \zeta_h = \xi_f (\eta_g \zeta_h)$ whenever the multiplication is defined;
    \item $\| \xi_f \eta_g\| \leq \|\xi_f\| \| \eta_g\|$ for all $\xi_f \in \FB_f$, $\eta_g \in \FB_g$, $f, g \in G(-, -)$, $s(f) = t(g)$.
\end{enumerate}
An involution on $\{\FB_f\}_{f \in G(-, -)}$ is a family of continuous involutive conjugate linear maps
\begin{align*}
    \xi_f \mapsto \xi_f^* : \FB_{f} \to \FB_{f^{-1}}, \quad f \in G(-, -), \xi_f \in \FB_f.
\end{align*}

\begin{defn}[\cite{Kumjian1998}]\label{def:_the_fell_bundle_groupoid}
A Fell bundle over a discrete groupoid $G$ is a Banach bundle $\{\FB_g\}_{g \in G(-, -)}$  equipped with a multiplication and an involution such that  for $f, g \in G(-, -)$, $\xi_f \in \FB_f$, $\eta_g \in \FB_g$ with $s(f) = t(g)$,
\begin{align*}
    (\xi_f \eta_g)^* = \eta^*_g \xi^*_f, \quad \| \xi_f^* \xi_f\| = \| \xi_f\|^2,
\end{align*}
and $\xi_f^* \xi_f$ is a positive element of the C$^*$-algebra $\FB_{s(f)}$.
\end{defn}

For the rest of this subsection, we assume that $\FB = \{\FB_f\}_{f \in G(-, -)}$ is a Fell bundle over a discrete groupoid $G$.
\begin{remark}
    For each $f \in G(-,-)$,  $\FB_f$ together  with inner products
    \begin{align*}
    \rin{\FB_{s(f)}}[\xi_f][\eta_f] := \xi^*_f \eta_f, \quad \lin{\FB_{t(f)}}[\xi_f][\eta_f] := \xi_f \eta^*_f
    \end{align*}
     is a Hilbert $C^*$ $\FB_{t(f)}$-$\FB_{s(f)}$-bimodule. And $\FB^*_f = \FB_{f^{-1}}$.
    \end{remark}

\begin{defn}[\cite{Feifei}]
 A representation of $\FB $ in a $C^*$-algebra $\CC$ is a collection of linear maps $\pi = \{ \pi_g: \FB_g \to \CC\}_{g \in G(-, -)}$ such that
\begin{enumerate}
\item $\pi_f(\xi_f) \pi_g(\eta_g) =  \delta_{s(f), t(g)}\pi_{fg}(\xi_f \eta_g) $;
\item $\pi_g(\eta_g)^* = \pi_{g^{-1}}(\eta^*_g)$,
\end{enumerate}
 for all$f, g \in G(-, -)$,  $\xi_f \in \FB_f$, $\eta_g \in \FB_g$.
\end{defn}

Recall that $C_c(\FB)$ is the set of  finitely supported sections of $\FB$, i.e., a family of maps $\xi : G(-, -) \to \cup_{g \in G(-, -)} \FB_g$ with finite support such that $ \xi(g) \in \FB_g$.
Then $C_c(\FB)$ together with multiplication and involution
\begin{align*}
\xi \eta (h) := \sum_{h = fg} \xi(f) \eta(g),\quad  \xi^*(f) := \xi(f^{-1})^*
\end{align*}
 is a $*$-algebra (see \cite{Feifei} for more detials).
\begin{defn}[\cite{Feifei}]
The full cross sectional $C^*$-algebra  $C^*(\FB)$ of $\FB$ is the completion of $C_c(\FB)$ with respect to the norm
\begin{align*}
\| \xi \|_u := sup_{\pi}\{ \| \pi(\xi)\|: \pi \ \ \textit{is the $*$-representation of $C_c(\FB)$}\},
\end{align*}
where $\xi \in C_c(\FB)$.
\end{defn}

 The set of representations of $\FB$ in C$^*$-algebras corresponds to the set of  $*$-homomorphisms from $C_c(\FB)$ into C$^*$-algebras one-to-one.
Then $C^*(\FB)$ is a $C^*$-algebra generated by the image of the universal  representation $\pi^u = \{ \pi^u_g\}$ of $\FB$  in the sense that  if $\pi = \{ \pi_g\}_{g \in G(-, -)}$ is a representation of $\FB$ in  a  $C^*$-algebra $\CB$, then there exists a unique $*$-homomorphism $\widehat{\pi} : C^*(\FB) \to \CB$ such that $\widehat{\pi} \circ \pi^u_g(\xi_g) = \pi_g(\xi_g)$ for all $g \in G(-, -)$ and $\xi_g \in \FB_g$.

\section{Cuntz-Nica-Pimsner algebras of product systems over quasi-lattice ordered groupoids}
Let $X= (\{ \CA_x\}, \{X_f\})$ be a compactly aligned product system over $(G, P)$. For $g \in P(x,y)$, let
\begin{align*}
I_g:= \begin{cases}
    \CA_y & x=y \mbox{ and } g = 1_y;\\
    \cap_{ 1_y < f\leqslant g} \Ker \phi_f & x=y \mbox{ and } g \neq 1_y; \\
    0 &  x \neq y,
\end{cases}
\end{align*}
where $\phi_f$ is the left action of $\CA_{y}$ on $X_f$. We use $C^g_c(X)$ to denote the set of maps $$\xi: \{ f \in P(x, y) : f \leq g \} \to \cup_{f \in P(x, y), f \leq g} X_f I_{f^{-1} g}$$ with finite support such that $\xi(f) \in  X_f I_{f^{-1} g}$. Note that $C^g_c(X)$ together with the right $\CA_x$-action and right $\CA_x$-valued inner product
\begin{align*}
[\xi \cdot a_x] (f):= \xi(f) a_x  \ \  \mbox{and} \ \ \rin{\CA_x}[\xi][\eta] = \sum_{f \in P(x, y), f \leq g}\rin{\CA_x}[\xi(f)][\eta(f)],
\end{align*}
for all $\xi, \eta \in C^g_c(X)$ and $a_x \in \CA_x$,  is a pre-Hilbert right $\CA_x$-module.
Let $\widetilde{X}_g$ be the completion of $C^g_c(X)$.
We will use $\widetilde{\phi}_g$ to denote the left action of $\CA_y$ on $\widetilde{X}_g$ induced by $\phi_g$. We say $X$ is $\widetilde{\phi}$-injective if every $\widetilde{\phi}_g$ is injective.

For $f, g \in P(-,y)$,  let  $\widetilde{l}_f^g : \CL(X_f) \to \CL(\widetilde{X}_g)$ be the $*$-homomorphism defined by
  \begin{align*}
  \widetilde{l}^g_f (A)(\xi)(h) := l^h_f(A)(\xi(h)),
 \end{align*}
 for $A \in L(X_f)$, $\xi \in \widetilde{X}_g$, $h \in P(s(g), y)$ and $h \leqslant g$. In particular, $\widetilde{l}_{1_y}^g (a) (\xi) = \widetilde{\phi}_g(a) \xi$, for all $a \in \CA_{y}$.

Similar to the definition 3.8 in \cite{Sims2010},  we say a statement $\PPP(g)$ is true for large $g$ if for every $f \in P(-, -)$, there exists $h \in P(-, -)$, $h \geqslant f$ such that $\PPP(g)$ is true whenever $g\geq h$.

\begin{defn}
Assume that $X$ is $\widetilde{\phi}$-injective.
A Toeplitz representation  $\psi = \{ \psi_f \}_{f \in P(-, -)}$ of $X$  is Cuntz-Pimsner covariant if $\sum_{f \in \F}\psi^{(f)}(T_f) = 0$ for every $y \in \ob(P)$, every finite subset $\F$ of  $P(-, y)$,  and every choice of compact operators $\{ T_f \in \CK(X_f) : f \in \F\}$ such that $\sum_{f \in \F}\widetilde{l}^g_f(T_f) = 0$ for large $g$.

A Nica covariant Toeplitz representation is called a Cuntz-Nica-Pimsner covariant (or CNP-covariant) representation if it is Cuntz-Pimsner covariant.
\end{defn}

A nonempty set $\CS \subseteq P(-, y)$ is said to be bounded if there exists $g \in P(-, y)$ such that $k \leqslant g$ for every $k \in \CS$. An element $f \in \CS$ is called maximal (resp. minimal) if $f \nleqslant h$ (resp. $h \nleq f$) for every $h \in \CS \backslash \{f \}$.

We next show that $X$ is $\widetilde{\phi}$-injective under some conditions.

\begin{prop}\label{lem: J_X_is_injective }
\begin{enumerate}
\item If $\phi_f$ is injective for all $f \in P(-, -)$, then $X$ is $\widetilde{\phi}$-injective.
\item   Suppose every nonempty bounded subset of $P(x, y)$ contains a maximal element and for every  $k \in P(x, y)$, $\{ f \in P(x, y): f \leq k\}$ has a minimal element $f'$
such that $\phi_{f'}$ is injective. Then $X$ is $\widetilde{\phi}$-injective.
\end{enumerate}
\end{prop}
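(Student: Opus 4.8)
The plan is to first render $\widetilde{\phi}_g$ completely explicit so that injectivity becomes a fibrewise condition, then dispose of Part (1) immediately, and finally prove Part (2) by a maximality argument fed into a tensor-product vanishing (``diagonal'') trick. The genuinely groupoid-specific obstacle is that a divisor $f\le g$ may have $s(f)\ne s(g)$, so it escapes the hom-set $P(x,y)$ on which the hypotheses give control; overcoming this is the crux.

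\medskip
\noindent\emph{Reduction and Part (1).} Unwinding $\widetilde{l}^g_{1_y}$, for $g\in P(x,y)$, $a\in\CA_y$ and $\xi\in\widetilde{X}_g$ one has $(\widetilde{\phi}_g(a)\xi)(h)=\phi_h(a)\,\xi(h)$ for each $h\le g$ in $P(x,y)$. Hence $\widetilde{\phi}_g(a)=0$ if and only if $\phi_h(a)\,X_hI_{h^{-1}g}=0$ for every such $h$; in particular the top fibre $h=g$ (where $I_{g^{-1}g}=\CA_x$) already forces $\phi_g(a)=0$. Thus, to prove injectivity, it suffices for each $a\ne 0$ to exhibit one $h\le g$ with $\phi_h(a)\,X_hI_{h^{-1}g}\ne 0$. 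For Part (1): if every $\phi_f$ is injective then for $f<g$ the loop $f^{-1}g\ne 1_x$ satisfies $I_{f^{-1}g}=\cap_{1_x<e\le f^{-1}g}\Ker\phi_e=0$ (the term $e=f^{-1}g$ alone contributes $0$), so only the fibre $f=g$ survives, giving $\widetilde{X}_g=X_g$ and $\widetilde{\phi}_g=\phi_g$, which is injective.

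\medskip
\noindent\emph{Setup for Part (2).} Fix $a\ne 0$ and put $\CS_{g,a}=\{f\in P(x,y):f\le g,\ \phi_f(a)\ne 0\}$. Applying the minimal-element hypothesis to $k=g$ gives $f'\le g$ with $\phi_{f'}$ injective, so $\phi_{f'}(a)\ne 0$ and $\CS_{g,a}\ne\emptyset$. Since $\CS_{g,a}$ is bounded by $g$, the maximality hypothesis yields a maximal element $h\in\CS_{g,a}$, so that $\phi_h(a)\ne 0$.

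\medskip
\noindent\emph{Key claim.} I will show $\phi_f(a)=0$ for \emph{every} $f$ with $h<f\le g$, irrespective of $s(f)$. When $s(f)=x$ this is maximality of $h$ in $P(x,y)$. When $s(f)\ne x$, the co-factor $f^{-1}g$ lies in $P(x,s(f))$; choose, via the minimal-element hypothesis, a minimal $p'\le f^{-1}g$ in $P(x,s(f))$ with $\phi_{p'}$ injective, noting $p'\ne 1_x$ because $x\ne s(f)$. Then $f_1:=fp'\in P(x,y)$ satisfies $h<f_1\le g$, so $\phi_{f_1}(a)=0$ by maximality. Transporting along the isomorphism $X_{f_1}\cong X_f\ot{\CA_{s(f)}}X_{p'}$, under which $\phi_{f_1}(a)=\phi_f(a)\otimes I$, the vanishing forces $\rin{\CA_{s(f)}}[\phi_f(a)\zeta][\phi_f(a)\zeta]\in\Ker\phi_{p'}=0$ for all $\zeta\in X_f$, whence $\phi_f(a)=0$. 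This re-entry device $f_1=fp'$ is exactly what repairs the failure of the maximality hypothesis to see divisors outside $P(x,y)$, and is the main obstacle of the proof.

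\medskip
\noindent\emph{Conclusion.} Write $k=h^{-1}g$. Each $e$ with $1_x<e\le k$ gives $h<he\le g$, so $\phi_{he}(a)=0$ by the claim; transporting along $X_{he}\cong X_h\ot{\CA_x}X_e$ yields $\rin{\CA_x}[\phi_h(a)\zeta][\phi_h(a)\zeta]\in\Ker\phi_e$ for every $\zeta\in X_h$, and intersecting over all such $e$ gives $\rin{\CA_x}[\phi_h(a)\zeta][\phi_h(a)\zeta]\in I_k$. Pick $\zeta_0$ with $\zeta':=\phi_h(a)\zeta_0\ne 0$ and set $b:=\rin{\CA_x}[\zeta'][\zeta']\in I_k$. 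Then $\zeta'b=\phi_h(a)(\zeta_0 b)\in \phi_h(a)X_hI_k$ and $\|\zeta'b\|=\|\zeta'\|^3\ne 0$, so $\phi_h(a)X_hI_{h^{-1}g}\ne 0$ and $\widetilde{\phi}_g(a)\ne 0$. Therefore $\widetilde{\phi}_g$ is injective, as required.
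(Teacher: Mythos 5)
Your proof is correct, and its skeleton is the same as the paper's: take a maximal element $f_0$ (your $h$) of $\CS=\{f\in P(x,y): f\le g,\ \phi_f(a)\ne 0\}$, which is nonempty by the minimal-element hypothesis and bounded by $g$; show that the inner products $\rin{\CA_x}[\phi_{f_0}(a)\xi][\phi_{f_0}(a)\xi]$ lie in $I_{f_0^{-1}g}$; and conclude that $\phi_{f_0}(a)$ does not vanish on $X_{f_0}I_{f_0^{-1}g}$. The genuine difference is your key claim. The ideal $I_{f_0^{-1}g}$ intersects $\Ker\phi_e$ over \emph{all} $e$ with $1_x<e\le f_0^{-1}g$, including those with $s(e)\ne x$; for such $e$ the element $f_0e$ lies in $P(s(e),y)$, outside the hom-set $P(x,y)$ in which $f_0$ is maximal. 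The paper's proof simply asserts $\phi_{f_0e}(a)=0$ for all these $e$, which maximality alone justifies only when $s(e)=x$. Your re-entry device --- choosing, via the minimal-element hypothesis applied in $P(x,s(e))$, an injective $\phi_{p'}$ with $p'\le (f_0e)^{-1}g$, so that $f_0ep'\in P(x,y)$ strictly dominates $f_0$ and is $\le g$, and then cancelling $p'$ through the identification $X_{f_0ep'}\cong X_{f_0e}\ot{\CA_{s(e)}}X_{p'}$ using injectivity of $\phi_{p'}$ --- supplies exactly the missing argument, at the cost of invoking the hypotheses in hom-sets other than $P(x,y)$ (the natural reading of the statement, and needed anyway since the conclusion quantifies over all $g$). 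Your endgame is also marginally more elementary: instead of the paper's increasing approximate identity for $I_{f_0^{-1}g}$, you multiply by the single element $b=\rin{\CA_x}[\zeta'][\zeta']\in I_{f_0^{-1}g}$ and use $\|\zeta'b\|=\|\zeta'\|^3$. In short, the two proofs run parallel, but yours closes a real gap in the paper's treatment of divisors whose source differs from $x$, which is precisely the groupoid-specific difficulty absent from the Sims--Yeend group case.
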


\begin{proof}
(1)  If $\phi_g$ is injective for every $g \in P(-, -)$, then we can identify $\widetilde{X}_g$ with $ X_g$. So $\widetilde{\phi}_g$ is injective.

(2)   Let $g \in P(x, y)$ and $0 \neq a \in \CA_y$. It is enough to show that there exist some $f \leqslant g$, $f \in P(x, y)$, such that $\phi_f (a) |_{X_f \cdot I_{f^{-1}g}}\neq 0$.
Let $$\CS = \{ f \in P(x, y) : f \leqslant g, \phi_f(a)\neq 0\}.$$ Since  $\{ f \in P(x, y) : f \leqslant g\}$  has a  minimal element  $f'$ such that $\phi_{f'}$ is injective,  $\CS$ is nonempty.  Let $f_0$ be a maximal element of $\CS$. Then there is a $\xi \in X_{f_0}$ such that $\phi_{f_0}(a)\xi \neq 0$. If $1_x < h \leqslant f_0^{-1}g$, we have $ \phi_{f_0h}(a)=0$.
For $\eta \in X_h$, we have
\begin{align*}
\rin{\CA_{s(h)}}[\eta][\rin{\CA_{x}}[\phi_{f_0}(a)\xi][\phi_{f_0}(a)\xi] \eta] = \rin{\CA_{s(h)}}[(\phi_{f_0}(a)\xi) \eta][(\phi_{f_0}(a) \xi )\eta]  = 0.
\end{align*}
So $\rin{\CA_{x}}[\phi_{f_0}(a)\xi][\phi_{f_0}(a) \xi] \in \ker \phi_h$.
It follows that $\rin{\CA_{x}}[\phi_{f_0}(a)\xi][\phi_{f_0}(a) \xi] \in I_{f_0^{-1}g}$.
Recall that if $X$ is a right Hilbert $\CA$-module and $I$ is an ideal of $\CA$, then
$$X\cdot I = \{ \xi \in X: \rin{\CA}[\xi][\xi] \in I\}$$
(see \cite{Sims2010}).
Then $\phi_{f_0}(a)\xi \in X_{f_0} I_{f_0^{-1}g}$.
Suppose $\{ e_i\}_{i \in \CI}$ is an increasing positive approximate identity for $I_{f_0^{-1}g}$.
Then
\begin{align*}
0 \neq \phi_{f_0}(a) \xi  = \lim_{i \in \CI} \phi_{f_0}(a) \xi \cdot e_{i}.
\end{align*}
Then there exists $i_0$ such that $ \phi_{f_0}(a) \xi \cdot e_{i_0} \neq 0$.  We have $\phi_{f_0}(a) |_{X_{f_0} I_{f_0^{-1}g}} \neq 0$ since $ \xi \cdot e_{i_0} \in X_{f_0} I_{f_0^{-1} g}$. This completes the proof.
\end{proof}

For a Nica covariant Toeplitz representation $\psi=\{ \psi_f\}_{f \in P(-, -)}$ of product system $X = ( \{ \CA_x\}, \{ X_f \})$, let $I_{C^*(\psi)} \subset C^*(\psi)$ be the closed ideal generated by $\cup_{y \in \ob(P)} I_{ C^*(\psi), y}$, where
\begin{align*}
I_{C^*(\psi), y}= &\{ \sum_{f \in \F} \psi^{(f)} (T_f) : \textit{$\F$ is a finite set in $P(-, y)$}, \\
 & \text{$T_f \in \CK(X_f)$  and $\sum_{f \in \F} \widetilde{l}^g_f (T_f) = 0$ for large $g \in P(-, y)$}\}.
\end{align*}
Let $q_{C^*(\psi)}$ be the quotient map from $ C^*(\psi)$ to $C^*(\psi) / I_{C^*(\psi)}$.

\begin{lem}\label{lem:CNP_con}
With  notations as above.
$$ q_{C^*(\psi)} \circ \psi:= \{ q_{C^*(\psi)} \circ \psi_f:  X_f \to C^*(\psi) / I_{C^*(\psi)}\}_{f \in P(-, -)}$$
is a CNP-covariant Toeplitz representation.
\end{lem}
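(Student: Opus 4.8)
The plan is to verify the three defining properties of a CNP-covariant representation in turn, writing $q := q_{C^*(\psi)}$ and exploiting throughout that $q$ is a $*$-homomorphism and that the ideal $I_{C^*(\psi)}$ was engineered precisely to absorb the Cuntz--Pimsner relations.

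First I would observe that $q\circ\psi$ is a Toeplitz representation. Each $q\circ\psi_f$ is linear, and since $q$ is a $*$-homomorphism, composing it with the three axioms of \cref{def:iso_rep_comp_alig_prod_system} preserves them verbatim. For instance $(q\circ\psi_g)(\beta)(q\circ\psi_f)(\xi) = q\big(\psi_g(\beta)\psi_f(\xi)\big) = q\big(\psi_{gf}(\beta\xi)\big) = (q\circ\psi_{gf})(\beta\xi)$, and similarly for the inner-product axiom; in particular $q\circ\psi_{1_x}$ is again a $*$-homomorphism.

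Next I would record the key identity $(q\circ\psi)^{(f)} = q\circ\psi^{(f)}$ on $\CK(X_f)$. This follows by evaluating both sides on a rank one operator $\theta_{\xi,\eta}$, where $(q\circ\psi)^{(f)}(\theta_{\xi,\eta}) = (q\circ\psi_f)(\xi)(q\circ\psi_f)(\eta)^* = q\big(\psi_f(\xi)\psi_f(\eta)^*\big) = q\big(\psi^{(f)}(\theta_{\xi,\eta})\big)$, and then extending by linearity and continuity, both maps being $*$-homomorphisms that agree on the dense span of rank one operators. With this identity in hand, Nica covariance of $q\circ\psi$ is immediate from that of $\psi$: for $S\in\CK(X_f)$ and $T\in\CK(X_g)$ with $f\vee g<\infty$ we get $(q\circ\psi)^{(f)}(S)(q\circ\psi)^{(g)}(T) = q\big(\psi^{(f)}(S)\psi^{(g)}(T)\big) = q\big(\psi^{(f\vee g)}(S\vee T)\big) = (q\circ\psi)^{(f\vee g)}(S\vee T)$, with both sides vanishing when $f\vee g=\infty$ by the convention $\psi^\infty\equiv 0$.

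The heart of the argument is Cuntz--Pimsner covariance, and here the work has already been done in defining $I_{C^*(\psi)}$. Fix $y\in\ob(P)$, a finite $\F\subseteq P(-,y)$, and compacts $\{T_f\in\CK(X_f):f\in\F\}$ with $\sum_{f\in\F}\widetilde{l}^g_f(T_f)=0$ for large $g$. Using the key identity and linearity of $q$ I would rewrite $\sum_{f\in\F}(q\circ\psi)^{(f)}(T_f) = q\big(\sum_{f\in\F}\psi^{(f)}(T_f)\big)$. By the very definition of $I_{C^*(\psi),y}$ the element $\sum_{f\in\F}\psi^{(f)}(T_f)$ lies in $I_{C^*(\psi),y}\subseteq I_{C^*(\psi)} = \ker q$, so its image under $q$ is zero, which is exactly Cuntz--Pimsner covariance. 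I do not expect a genuine obstacle: the only point requiring care is the passage from rank one operators to all of $\CK(X_f)$ in the key identity, and this is routine once one notes that $\psi^{(f)}$ and its composite with $q$ are honest $*$-homomorphisms. The conceptual content is entirely contained in the design of $I_{C^*(\psi)}$, which makes the Cuntz--Pimsner relation hold automatically in the quotient.
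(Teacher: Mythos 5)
Your proof is correct and takes essentially the same route as the paper's: the paper's (far terser) proof simply observes that composing a Nica covariant Toeplitz representation with the $*$-homomorphism $q_{C^*(\psi)}$ again gives a Nica covariant Toeplitz representation, and that CNP-covariance is then automatic from the definition of $I_{C^*(\psi)}$ --- exactly the points you verify in detail. In particular, your identity $(q\circ\psi)^{(f)} = q\circ\psi^{(f)}$ on $\CK(X_f)$ and the observation that $\sum_{f\in\F}\psi^{(f)}(T_f) \in I_{C^*(\psi),y} \subseteq \ker q_{C^*(\psi)}$ are precisely what the paper's word ``hence'' leaves implicit.
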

\begin{proof}
Since  $\psi$ is  a Nica covariant Toeplitz representation of $X$ and $q_{C^*(\psi)}$ is a $*$-homomorphism,  $q_{C^*(\psi)} \circ \psi$ is a Nica covariant Toeplitz representation of $X$  and hence
$q_{C^*(\psi)} \circ \psi$ is  CNP-covariant.
\end{proof}

\begin{lem}\label{pro:_faith_ful_property}
If  $X$ is $\widetilde{\phi}$-injective, then $ q_{\TTT^r_X} \circ L $ is injective, where $L$ is the Fock representation of $X$ and $\TTT^r_X = C^*(L)$ (see  \cref{Sub:_fock}).
\end{lem}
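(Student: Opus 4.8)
The plan is to unwind the definition of injectivity for a Toeplitz representation and reduce to a faithfulness statement on the coefficient algebras. Since $q_{\TTT^r_X} \circ L$ is injective exactly when each component $q_{\TTT^r_X} \circ L_{1_y}$ is injective on $\CA_y = X_{1_y}$, and since $L_{1_y}$ is already isometric on $\CA_y$, it suffices to show that $\{L_a : a \in \CA_y\} \cap I_{\TTT^r_X} = \{0\}$ for every $y \in \ob(P)$; equivalently, that $L_a \in I_{\TTT^r_X}$ with $a \in \CA_y$ forces $a = 0$.

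The main device is the family of submodules $\widetilde{X}_g \subseteq \F(X)$ and the compression of operators onto them. I would first record two facts. Fact (i): for $a \in \CA_y$ and any $g \in P(-, y)$, the operator $L_a$ preserves $\widetilde{X}_g$ and acts there as $\widetilde{\phi}_g(a)$, so $\widetilde{\phi}$-injectivity yields $\|L_a|_{\widetilde{X}_g}\| = \|\widetilde{\phi}_g(a)\| = \|a\|$. Fact (ii): for a generator $S = \sum_{f \in \F} L^{(f)}(T_f) \in I_{\TTT^r_X, y}$ the compression $S|_{\widetilde{X}_g}$ equals $\sum_{f \in \F} \widetilde{l}^g_f(T_f)$, which therefore vanishes for large $g \in P(-, y)$ by the very definition of $I_{\TTT^r_X, y}$.

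The heart of the proof is the claim that for every $b \in I_{\TTT^r_X}$, every $y \in \ob(P)$, and every $g_0 \in P(-, y)$, and every $\epsilon > 0$, there is $\gamma_0 \geq g_0$ with $\|b|_{\widetilde{X}_h}\| \leq \epsilon$ for all $h \geq \gamma_0$ in $P(-, y)$. By Nica covariance and \eqref{Nica_covariant_eq}, the ideal $I_{\TTT^r_X}$ is the closed span of elements of the form $L_{\xi_f} L_{\eta_k}^* S L_{\xi'_{f'}} L_{\eta'_{k'}}^*$ with $S \in I_{\TTT^r_X, y'}$, so after approximating $b$ by a finite sum of such generators it is enough to verify an exact vanishing on each generator. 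Here I would compress onto $\widetilde{X}_h$ and track how the outer operators shift the degree: $L_{\eta'_{k'}}^*$ sends $\widetilde{X}_h$ into $\widetilde{X}_{k'^{-1}h}$ when $k' \leq h$ and to $0$ otherwise, and $L_{\xi'_{f'}}$ then carries the result into $\widetilde{X}_{f'k'^{-1}h}$, where $S$ acts. Splitting into the case $k' \vee g_0 = \infty$ (where $h \geq g_0$ forces $k' \nleq h$, hence $L_{\eta'_{k'}}^*|_{\widetilde{X}_h} = 0$) and the case $k' \vee g_0 < \infty$ (where one picks $\gamma' \geq f'k'^{-1}(k'\vee g_0)$ with $S|_{\widetilde{X}_{h'}} = 0$ for $h' \geq \gamma'$ by Fact (ii), and sets $\gamma_0 = k'f'^{-1}\gamma'$, so that $h \geq \gamma_0$ gives $f'k'^{-1}h \geq \gamma'$) establishes the vanishing; the degenerate case $f' \notin P(s(k'), y')$ is discarded since the composite is then identically zero.

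Combining these gives the lemma: if $L_a \in I_{\TTT^r_X}$ with $a \in \CA_y$, the claim produces arbitrarily large $h \in P(-, y)$ with $\|L_a|_{\widetilde{X}_h}\|$ arbitrarily small, while Fact (i) forces $\|L_a|_{\widetilde{X}_h}\| = \|a\|$; hence $a = 0$. The step I expect to be most delicate is the degree bookkeeping in the groupoid setting, where the partial multiplication means one must repeatedly check that sources and targets match for the composites $k'^{-1}h$, $f'k'^{-1}h$, $k'f'^{-1}\gamma'$, etc., to be defined, together with a careful handling of the ``for large $g$'' quantifier so that a single $\gamma_0$ survives the approximation of $b$ by finite sums of generators.
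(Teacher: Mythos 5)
Your proposal is correct and follows essentially the same route as the paper's proof: the same reduction to showing $I_{\TTT^r_X} \cap L_{\CA_y} = \{0\}$, the same norm identity $\|L_a|_{\widetilde{X}_g}\| = \|\widetilde{\phi}_g(a)\| = \|a\|$, and the same case analysis on the spanning elements $L_{\xi_f}L_{\eta_k}^* S L_{\xi'_{f'}}L_{\eta'_{k'}}^*$ (splitting on $k' \vee g_0 = \infty$ versus $k' \vee g_0 < \infty$, with the same choice $\gamma_0 = k'f'^{-1}\gamma'$ so that $f'k'^{-1}h \geqslant \gamma'$). The only cosmetic difference is that you take $S$ in $I_{\TTT^r_X, y'}$ itself and push the closure into the approximation of $b$ by finite sums, whereas the paper takes $S$ in the closure $\overline{I}_{\TTT^r_X, z}$ and proves an $\epsilon$-estimate directly; these are interchangeable bookkeeping choices.
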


\begin{proof}
It suffices to show that for every $y \in \ob(P)$,  we have  $I_{\TTT^r_X} \cap L_{\CA_y} = \{0\}$.  Note that $\widetilde{X}_g$ is a submodule of $\F(X)$, and $L_a\xi = \widetilde{\phi}_g(a)\xi$ for $a \in \CA_y$, $\xi \in \widetilde{ X}_g$, $g \in P(-, y)$.  Since
$\widetilde{\phi}_g$ is injective,  $\| L_a |_{\widetilde{X}_g} \|= \|\widetilde{\phi}_g(a)\| = \|a \|$.

We use $\overline{I}_{\TTT^r_X, z}$ to denote the closure of $I_{\TTT^r_X, z}$. Let $S \in \overline{I}_{\TTT^r_X,z}$ and $\xi_f\in X_f$, $ \eta_k \in X_k$, $ \xi'_{f'} \in X_{f'}$, $\eta'_{k'} \in X_{k'}$. We claim that for every $\epsilon \geq0$, and $g_0 \in P(-, y)$, there exists $\gamma_0 \geqslant g_0$ such that $\|  L_{\xi_f} L_{\eta_k}^* S L_{\xi'_{f'}} L_{\eta'_{k'}}^*|_{\widetilde{X}_h}\| \leq \epsilon$ for every $h \geqslant \gamma_0$.

If $k' \vee g_0 = \infty$, then for $h \geqslant g_0$, we have $h \ngeqslant k'$. Therefore $ L_{\eta'_{k'}}^*|_{\widetilde{X}_h} = 0$. And $$ L_{\xi_f} L_{\eta_k}^* S L_{\xi'_{f'}} L_{\eta'_{k'}}^*|_{\widetilde{X}_h} =0, \forall h \geqslant g_0.$$ Assume now that $k' \vee g_0 <\infty$. If $f' \notin P(s(k'), z)$, then $L_{\xi_f} L_{\eta_k}^* S L_{\xi'_{f'}} L_{\eta'_{k'}}^* |_{\widetilde{X}_h} = 0$ for every $h \in P(-, -)$. If $ f' \in P(s(k'), z)$, then there's $\gamma' \geqslant f' k'^{-1} (k' \vee g_0)$ such that
  \begin{align}\label{equ:_ineq}
  \| S |_{\widetilde{X}_{h'}}\| \leq \epsilon / (\| \xi_f \| \| \eta_k\| \| \xi'_{f'} \| \| \eta'_{k'}\|)
  \end{align} for every $h' \geqslant \gamma'$, since $S \in \overline{I}_{\TTT^r_X, z}$. Let $\gamma_0 := k' f'^{-1}\gamma' (\geqslant g_0)$. Then for $h \geqslant \gamma_0$ we have $f' k'^{-1} h \geqslant \gamma'$. Thus \cref{equ:_ineq} implies that
 \begin{align*}
 \| L_{\xi_f} L_{\eta_k}^* S L_{\xi'_{f'}} L_{\eta'_{k'}}^*|_{\widetilde{X}_h} \| \leq \| \xi_f \| \| \eta_k\| \| \xi'_{f'} \| \| \eta'_{k'}\| \| S |_{\widetilde{X}_{f' k'^{-1} h}}\| \ \leq \epsilon, \quad \forall h \geqslant \gamma_0.
 \end{align*}

Since $L$ is Nica covariant (see \cref{Nica_covariant_eq}), we have  $$I_{\TTT^r_X} = \overline{\rm{span}}\{ L_{\xi_f} L_{\eta_k}^* S L_{\xi'_{f'}}  L_{\eta'_{k'}}^* : S \in \cup_{z \in \ob(P)}  \overline{I}_{\TTT^r_X, z}, \xi_f,  \eta_k,  \xi'_{f'},  \eta'_{k'} \in X\}.$$ Then the claim above implies that $I_{\TTT^r_X} \cap L_{\CA_y} = \{0\}$.
\end{proof}

Next we will describe the universal $C^*$-algebras for  CNP-covariant representations of compactly aligned product systems over quasi-lattice ordered groupoids.

\begin{thm}
If $X$ is $\widetilde{\phi}$-injective, then there is a pair $(\NNN\OOO_X, j)$ such that
\begin{enumerate}
    \item $j$ is  an injective CNP-covariant Toeplitz representation of $X$ and $C^*(j) = \NNN\OOO_X$;
\item  $j$ is universal, i.e., for any  CNP-covariant representation $\psi = \{\psi_f\}_{f \in P(-, -)}$ of $X$,  there is a unique $*$-homomorphism $ \psi_* : \NNN\OOO_X \to C^*(\psi)$ such that the following diagram
     \begin{align*}
\xymatrix @R=0.2in {
   &  X \ar[ld]_{\psi} \ar[rd]^-{j} &  \\
  C^*(\psi) & &\NNN\OOO_X \ar[ll]_-{ \psi_*}
   }
\end{align*}
commutes.
\end{enumerate}
Moreover, $(\NNN\OOO_X, j)$ is the unique pair (up to canonical isomorphism) satisfying conditions (1) and (2).
\end{thm}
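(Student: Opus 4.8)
The plan is to realize $\NNN\OOO_X$ as a canonical quotient of the Nica-Toeplitz algebra. Let $i = \{i_f\}_{f \in P(-,-)}$ denote the universal Nica covariant Toeplitz representation generating $\NNN\TTT_X$, and let $I_{\NNN\TTT_X} := I_{C^*(i)}$ be the closed ideal defined just before \cref{lem:CNP_con} (with $\psi = i$, so that $C^*(i) = \NNN\TTT_X$). I would set
\[
\NNN\OOO_X := \NNN\TTT_X / I_{\NNN\TTT_X}, \qquad j := q_{\NNN\TTT_X} \circ i,
\]
where $q_{\NNN\TTT_X}$ is the quotient map. By \cref{lem:CNP_con}, $j$ is a CNP-covariant Toeplitz representation, and since $j(X)$ is the image of the generating set $i(X)$ under the surjection $q_{\NNN\TTT_X}$, we have $C^*(j) = \NNN\OOO_X$. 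This settles everything in (1) except the injectivity of $j$, which I postpone.

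For the universal property (2), let $\psi = \{\psi_f\}$ be an arbitrary CNP-covariant representation; in particular $\psi$ is Nica covariant, so the universal property of $\NNN\TTT_X$ yields a unique $*$-homomorphism $\psi' : \NNN\TTT_X \to C^*(\psi)$ with $\psi' \circ i_f = \psi_f$ and image $C^*(\psi)$. On rank-one operators one checks $\psi'\bigl(i^{(f)}(\theta_{\xi,\eta})\bigr) = \psi_f(\xi)\psi_f(\eta)^* = \psi^{(f)}(\theta_{\xi,\eta})$, so by linearity and continuity $\psi' \circ i^{(f)} = \psi^{(f)}$ on $\CK(X_f)$. Hence for every generator $\sum_{f \in \F} i^{(f)}(T_f)$ of $I_{\NNN\TTT_X}$ (where $\sum_{f \in \F} \widetilde{l}^g_f(T_f) = 0$ for large $g$) we obtain $\psi'\bigl(\sum_{f \in \F} i^{(f)}(T_f)\bigr) = \sum_{f \in \F} \psi^{(f)}(T_f) = 0$ by Cuntz-Pimsner covariance of $\psi$. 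Since $\ker \psi'$ is a closed ideal containing these generators, it contains $I_{\NNN\TTT_X}$, and $\psi'$ descends to a $*$-homomorphism $\psi_* : \NNN\OOO_X \to C^*(\psi)$ with $\psi_* \circ j = \psi$. Uniqueness of $\psi_*$ is immediate because $j(X)$ generates $\NNN\OOO_X$.

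To finish (1) I would establish injectivity of $j$ using the Fock representation $L$ of \cref{Sub:_fock}. The representation $q_{\TTT^r_X} \circ L$ is CNP-covariant by \cref{lem:CNP_con}, so the universal property just proved supplies a $*$-homomorphism $(q_{\TTT^r_X} \circ L)_* : \NNN\OOO_X \to C^*(q_{\TTT^r_X} \circ L)$ satisfying $(q_{\TTT^r_X} \circ L)_* \circ j = q_{\TTT^r_X} \circ L$. By \cref{pro:_faith_ful_property} the right-hand side is injective on each $\CA_x$ --- this is precisely where the hypothesis that $X$ is $\widetilde{\phi}$-injective is consumed --- and therefore each $j_{1_x}$ is injective as well, so $j$ is injective. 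Finally, for uniqueness of the pair up to canonical isomorphism I would apply the universal property twice: if $(\NNN\OOO'_X, j')$ also satisfies (1) and (2), then the universal property of $\NNN\OOO_X$ applied to the CNP-covariant representation $j'$ (with $C^*(j') = \NNN\OOO'_X$) gives $\Phi := j'_* : \NNN\OOO_X \to \NNN\OOO'_X$ with $\Phi \circ j = j'$, and symmetrically a map $\Psi : \NNN\OOO'_X \to \NNN\OOO_X$ with $\Psi \circ j' = j$; then $\Psi \circ \Phi \circ j = j$ forces $\Psi \circ \Phi = \id$ by the uniqueness clause, and likewise $\Phi \circ \Psi = \id$, so $\Phi$ is the desired canonical isomorphism. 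The only step carrying genuine content is the injectivity argument, whose weight rests entirely on \cref{pro:_faith_ful_property}; the remainder is a routine quotient-and-universal-property exercise.
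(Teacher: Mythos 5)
Your proposal is correct and follows essentially the same route as the paper: realize $\NNN\OOO_X$ as $\NNN\TTT_X/I_{\NNN\TTT_X}$ with $j = q_{\NNN\TTT_X}\circ i$, obtain CNP-covariance of $j$ from \cref{lem:CNP_con}, get $\psi_*$ by factoring the map $\psi'$ from the universal property of $\NNN\TTT_X$ through the quotient, and deduce injectivity of $j$ from \cref{pro:_faith_ful_property} applied to the Fock representation. Your write-up merely fills in details the paper leaves implicit (the check $\psi'\circ i^{(f)} = \psi^{(f)}$ on compacts, and the explicit factorization argument for injectivity), so there is nothing to correct.
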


\begin{proof}
Let $(\NNN\TTT_X, i)$ be the Nica-Toeplitz algebra of $X$. 
Let $\NNN\OOO_X:= \NNN\TTT_X / I_{\NNN\TTT_X}$. By \cref{lem:CNP_con}, $j := q_{\NNN\TTT_X}\circ i$ is a CNP-covariant representation of $X$, where $q_{\NNN\TTT_X} : \NNN\TTT_X \to \NNN\OOO_X$ is the quotient map. 
Let $\psi = \{\psi_f\}$ be a CNP-covariant representation of $X$. By the universal property of $i$, there is a $*$-homomorphism $\psi' : \NNN\TTT_X \to C^*(\psi)$ such that $\psi = \psi'\circ i$. Since $\psi$ is Cuntz-Pimsner covariant, $I_{\NNN\TTT_X} \subset \ker(\psi')$.
Thus $\phi'$ induces a $*$-homomorphism $ \psi_* : \NNN\OOO_X \to C^*(\psi)$ such that $\psi =  \psi_* \circ j$. In particular, \cref{pro:_faith_ful_property} implies that $j$ is injective.
By the universal property of $j$, it is clear that $(\NNN\OOO_X, j)$ is the unique pair (up to canonical isomorphism) satisfying (1) and (2).
\end{proof}

The pair $(\NNN\OOO_X, j)$ is called the Cuntz-Nica-Pimsner algebra of $X$. These algebras generalize  Cuntz-Nica-Pimsner algebras of product systems over quasi-lattice ordered group (see \cite[ Proposition 3.12]{Sims2010}).

\section{ Cuntz-Nica-Pimsner algebras  are generalizations of full cross sectional $C^*$-algebras of Fell bundles}

In this section, we assume that $X= (\{\CA_x\}, \{X_f\} )$ is a compactly aligned product system over quasi-lattice ordered groupoid $(G, P)$. We will show that the full cross sectional $C^*$-algebras of Fell bundles of Morita equivalence bimodules are isomorphic to the related Cuntz-Nica-Pimsner algebras. In order to do this, we first provide a characterization of the Cuntz-Pimsner covariant representation.

\begin{lem}\label{lem:covariant_re}
Let $\psi = \{ \psi_f \}_{f \in P(-, -)}$ be a Toeplitz representation of $X= (\{\CA_x\}, \{X_f\} )$. If $\phi_f (\CA_{t(f)}) \subseteq \CK(X_f)$ and $\psi^{(f)}(\phi_f(-)) = \psi_{1_{t(f)}}(-)$ for every $f \in P(-, -)$, then $l^{g}_f(T_f) \in \CK(X_g)$ and
 \begin{align*}
 \psi^{(g)}(l^g_f(T_f)) = \psi^{(f)}(T_f),
 \end{align*}
for $f \leqslant g \in P(-, -)$, $T_f \in \CK(X_f)$.
\end{lem}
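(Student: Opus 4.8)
The plan is to reduce everything to a single rank-one operator and then to exploit a pair of creation-type operators together with an approximate unit, the two hypotheses entering at distinct points. Throughout I would fix $f \leqslant g$ in $P(-,-)$ and set $h := f^{-1}g$, so that $t(h) = s(f)$, the fibre $X_h$ is an $\CA_{s(f)}$-$\CA_{s(g)}$ correspondence, and via $m_{f,h}$ every element of $X_g$ is a norm-limit of sums of products $\xi\omega = m_{f,h}(\xi \otimes \omega)$ with $\xi \in X_f$, $\omega \in X_h$. Since $l^g_f$ and $\psi^{(f)}$ are norm-continuous and linear, and since (once compactness is established) $\psi^{(g)}$ is too, and since $\CK(X_f) = \overline{\operatorname{span}}\{\theta_{\xi,\eta} : \xi,\eta \in X_f\}$, it suffices to prove both assertions for $T_f = \theta_{\xi,\eta}$. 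Note that both hypotheses are applied to the morphism $h$, not $f$: namely $\phi_h(\CA_{s(f)}) \subseteq \CK(X_h)$ and $\psi^{(h)}(\phi_h(a)) = \psi_{1_{s(f)}}(a)$ for $a \in \CA_{s(f)}$.

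The key device I would introduce is, for $\xi \in X_f$, the adjointable operator $T_\xi : X_h \to X_g$, $T_\xi\omega := \xi\omega$, whose adjoint satisfies $T_\xi^*(\xi'\omega') = \phi_h(\langle\xi,\xi'\rangle_{\CA_{s(f)}})\omega'$. A direct computation on products $\xi'\omega'$ yields the two identities $l^g_f(\theta_{\xi,\eta}) = T_\xi T_\eta^*$ and $T_\xi\,\theta_{\mu,\nu}\,T_\eta^* = \theta_{\xi\mu,\,\eta\nu}$ for $\mu,\nu \in X_h$. I expect the compactness claim to be the main obstacle: in general $T_\xi T_\eta^*$ (morally ``$\theta_{\xi,\eta}\otimes I_{X_h}$'') is \emph{not} compact, and it is precisely the hypothesis $\phi_h(\CA_{s(f)}) \subseteq \CK(X_h)$ that rescues it. Concretely $T_\eta^* T_\eta = \phi_h(\langle\eta,\eta\rangle_{\CA_{s(f)}}) \in \CK(X_h)$, so for an approximate unit $(e_\lambda)$ of $\CA_{s(f)}$ the $C^*$-identity gives
\[
\|T_\eta(1-\phi_h(e_\lambda))\|^2 = \|(1-\phi_h(e_\lambda))\phi_h(\langle\eta,\eta\rangle_{\CA_{s(f)}})(1-\phi_h(e_\lambda))\| \leqslant \|\phi_h(\langle\eta,\eta\rangle_{\CA_{s(f)}}) - \phi_h(e_\lambda\langle\eta,\eta\rangle_{\CA_{s(f)}})\|,
\]
which tends to $0$ because $e_\lambda\langle\eta,\eta\rangle_{\CA_{s(f)}} \to \langle\eta,\eta\rangle_{\CA_{s(f)}}$. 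Taking adjoints shows $T_\xi\phi_h(e_\lambda)T_\eta^* \to T_\xi T_\eta^*$ in norm; as each $\phi_h(e_\lambda) \in \CK(X_h)$, every $T_\xi\phi_h(e_\lambda)T_\eta^*$ is compact, whence $l^g_f(\theta_{\xi,\eta}) = T_\xi T_\eta^* \in \CK(X_g)$. This settles the first assertion, and the delicate point to verify carefully is that the convergence above is in norm, not merely strong — which is exactly what the compactness hypothesis delivers.

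For the identity I would compare the two norm-continuous linear maps $k \mapsto \psi^{(g)}(T_\xi k T_\eta^*)$ and $k \mapsto \psi_f(\xi)\psi^{(h)}(k)\psi_f(\eta)^*$ on $\CK(X_h)$. Using $T_\xi\theta_{\mu,\nu}T_\eta^* = \theta_{\xi\mu,\eta\nu}$, the definition of $\psi^{(g)}$, the Toeplitz relation $\psi_g(\xi\mu) = \psi_f(\xi)\psi_h(\mu)$, and the definition of $\psi^{(h)}$, the two maps agree on each rank-one $\theta_{\mu,\nu}$, hence on all of $\CK(X_h)$. Evaluating at $k = \phi_h(e_\lambda)$ and invoking the second hypothesis $\psi^{(h)}(\phi_h(e_\lambda)) = \psi_{1_{s(f)}}(e_\lambda)$ gives
\[
\psi^{(g)}(T_\xi\phi_h(e_\lambda)T_\eta^*) = \psi_f(\xi)\psi_{1_{s(f)}}(e_\lambda)\psi_f(\eta)^* = \psi_f(\xi e_\lambda)\psi_f(\eta)^*.
\]
Letting $\lambda \to \infty$, the left side converges to $\psi^{(g)}(l^g_f(\theta_{\xi,\eta}))$ by continuity of $\psi^{(g)}$ and the norm convergence from the previous paragraph, while the right side converges to $\psi_f(\xi)\psi_f(\eta)^* = \psi^{(f)}(\theta_{\xi,\eta})$ since $\xi e_\lambda \to \xi$. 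Thus $\psi^{(g)}(l^g_f(\theta_{\xi,\eta})) = \psi^{(f)}(\theta_{\xi,\eta})$, and extending by linearity and norm-continuity over $\CK(X_f)$ completes the proof.
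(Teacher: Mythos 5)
Your proof is correct. The overall strategy coincides with the paper's — both arguments hinge on applying the two hypotheses to $h = f^{-1}g$ rather than to $f$ itself, reducing to rank-one operators, and using density of $\operatorname{span}\{\theta_{\mu,\nu} : \mu,\nu \in X_h\}$ in $\CK(X_h)$ — but your execution is more self-contained and differs in two concrete ways. First, the paper disposes of the compactness assertion entirely by citing \cite[Proposition 4.7]{L95}, whereas you prove it directly via the factorization $l^g_f(\theta_{\xi,\eta}) = T_\xi T_\eta^*$ and the approximate-unit estimate showing $T_\eta \phi_h(e_\lambda) \to T_\eta$ in norm; this has the merit of exhibiting exactly where the hypothesis $\phi_h(\CA_{s(f)}) \subseteq \CK(X_h)$ enters (it upgrades strict convergence to norm convergence), which the citation hides. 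Second, for the identity the paper factors rank-ones as $\theta_{\xi a, \eta b}$ with $a,b \in \CA_{s(f)}$ (using $X_f \CA_{s(f)} = X_f$), rewrites $\psi^{(f)}(\theta_{\xi a,\eta b}) = \psi_f(\xi)\,\psi^{(f^{-1}g)}(\phi_{f^{-1}g}(ab^*))\,\psi_f(\eta)^*$, and then invokes \cite[Corollary 3.7]{MR1426840} to approximate $\phi_{f^{-1}g}(ab^*)$ by finite sums $\sum_i \theta_{\xi^n_i,\eta^n_i}$ simultaneously with $l^g_f(\theta_{\xi a,\eta b})$ by the corresponding sums $\theta_{\xi\xi^n_i,\eta\eta^n_i}$; you instead compare the two continuous maps $k \mapsto \psi^{(g)}(T_\xi k T_\eta^*)$ and $k \mapsto \psi_f(\xi)\psi^{(h)}(k)\psi_f(\eta)^*$ on $\CK(X_h)$, which agree on rank-ones, and evaluate at $k = \phi_h(e_\lambda)$, so your approximate unit also absorbs the nondegeneracy step ($\xi e_\lambda \to \xi$) that the paper handles by Cohen-type factorization. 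The trade-off: the paper's proof is shorter modulo its two references, while yours is elementary, avoids both citations, and makes the role of each hypothesis transparent.
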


\begin{proof}
Let $f \in P(-, -)$. Since $\phi_f (\CA_{t(f)}) \subseteq \CK(X_f)$, we have $l^{g}_f(T_f) \in \CK(X_g)$ for all $g \geq f$ and $T_f \in \CK(X_f)$ by \cite[Proposition 4.7]{L95}.
Note that $$\psi^{(f^{-1}g )}(\phi_{f^{-1} g}(-)) = \psi_{1_{s(f)}}(-)$$ for $g \geq f$. For $\xi$, $\eta \in X_f$, $a, b \in  \CA_{s(f)}$, we have
\begin{align*}
\psi^{(f)}(\theta_{\xi a, \eta b}) &=\psi_f(\xi) \psi_{1_{s(f)}}(a b^*) \psi_f (\eta)^*\\
& = \psi_f(\xi) \psi^{(f^{-1} g)}( \phi_{f^{-1}g} (ab^*)) \psi_f (\eta)^*.
\end{align*}
By \cite[corollary 3.7]{MR1426840}, there eixst $\xi^n_i, \eta^n_i \in X_{f^{-1} g}$ such that
    \begin{align*}
        \phi_{f^{-1} g}(ab^*) = \lim_n \sum_i \theta_{\xi^n_i, \eta^n_i}, \mbox{ and }  l^{g}_f (\theta_{\xi a, \eta b} )= \lim_n \sum_i \theta_{\xi  \xi^n_i, \eta \eta^n_i}.
    \end{align*}
Therefore, $\psi^{(f)}(\theta_{\xi a, \eta b}) = \psi^{(g)}(l^g_f(\theta_{\xi a, \eta b}))$. Since $\CK(X_f)$ is a $C^*$-algebra generated by rank one compact operators and $X_f \CA_{s(f)} = X_f$, we have $$\psi^{(g)}(l^g_f(T_f)) = \psi^{(f)}(T_f)$$ for every $T_f \in \CK(X_f)$.
\end{proof}


\begin{prop}\label{prop:_CUntz_pimsner_covariant_relation}
Let $\psi : X \to \CB$ be a Toeplitz  representation of $X = (\{ \CA_x\}, \{ X_f\})$. Assume that any two elements in $P(-, y)$ has a least upper bound for every $y \in \ob(P)$, and the left actions $\phi_f : \CA_{t(f)} \to \CL(X_f)$ is injective for every $f \in P(-, -)$.

(1) If $\psi$  is Cuntz-Pimsner covariant, then $\psi^{(f)}(\phi_f(a)) = \psi_{1_{t(f)}}(a)$ for $a \in \phi_f^{-1}(\CK(X_f))$, $f\in P(-, -)$.

(2) If $\phi_f (\CA_{t(f)}) \subseteq \CK(X_f)$ and  $\psi^{(f)}(\phi_f(-)) = \psi_{1_{t(f)}}(-)$ for every $f\in P(-, -)$, then $\psi$ is  Cuntz-Pimsner covariant.
\end{prop}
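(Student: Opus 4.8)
Throughout, the injectivity of every $\phi_f$ lets us identify $\widetilde{X}_g$ with $X_g$, as was observed earlier, so that $\widetilde{l}^g_f = l^g_f$; I would therefore read the Cuntz-Pimsner condition entirely in terms of the maps $l^g_f$. Two elementary facts will carry most of the argument. First, under the identification $\CK(X_{1_y}) \cong \CA_y$ we have $\psi^{(1_y)} = \psi_{1_y}$, since $\psi^{(1_y)}(bc^*) = \psi_{1_y}(b)\psi_{1_y}(c)^* = \psi_{1_y}(bc^*)$. Second, because $m_{f, f^{-1}g}$ is a $C^*$-correspondence isomorphism it intertwines the left actions, so for $f \leq g$ and $a \in \CA_{t(f)}$,
\begin{align*}
l^g_f(\phi_f(a)) = m_{f, f^{-1}g}(\phi_f(a) \otimes I) m^*_{f, f^{-1}g} = \phi_g(a) = l^g_{1_{t(f)}}(a).
\end{align*}

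To prove (1), fix $f \in P(x, y)$; the case $f = 1_y$ is immediate from the first fact, so assume $f \neq 1_y$ and take $a \in \phi_f^{-1}(\CK(X_f))$. The plan is to apply Cuntz-Pimsner covariance to the finite set $\F = \{1_y, f\} \subseteq P(-, y)$ with $T_f := \phi_f(a) \in \CK(X_f)$ and $T_{1_y} := -a \in \CA_y = \CK(X_{1_y})$. By the second fact, $l^g_f(T_f) + l^g_{1_y}(T_{1_y}) = \phi_g(a) - \phi_g(a) = 0$ for every $g \geq f$. This holds for large $g$: given $f' \in P(-, y)$, set $h := f' \vee f$, which exists by hypothesis and satisfies $h \geq f$, so the sum vanishes for all $g \geq h$ (for $g$ whose target is not $y$ every term already vanishes, so nothing is required there). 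Hence Cuntz-Pimsner covariance forces $\psi^{(f)}(\phi_f(a)) - \psi_{1_y}(a) = 0$, which is the claim.

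To prove (2), I would note that the hypotheses are exactly those of \cref{lem:covariant_re}, which gives $\psi^{(g)}(l^g_f(T_f)) = \psi^{(f)}(T_f)$ whenever $f \leq g$ and $T_f \in \CK(X_f)$. Let $\F \subseteq P(-, y)$ be finite (nonempty, else the statement is trivial) and let $T_f \in \CK(X_f)$ satisfy $\sum_{f \in \F} l^g_f(T_f) = 0$ for large $g$. Since any two elements of $P(-, y)$ have a least upper bound, a finite induction produces the join $f^* := \bigvee_{f \in \F} f \in P(-, y)$. Applying the large-$g$ hypothesis with $f' = f^*$ yields some $h \geq f^*$ with $\sum_{f \in \F} l^h_f(T_f) = 0$; since then $h \geq f$ for every $f \in \F$, \cref{lem:covariant_re} gives
\begin{align*}
\sum_{f \in \F} \psi^{(f)}(T_f) = \sum_{f \in \F} \psi^{(h)}(l^h_f(T_f)) = \psi^{(h)}\Big(\sum_{f \in \F} l^h_f(T_f)\Big) = 0.
\end{align*}

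The only genuinely structural point is the displayed identity $l^g_f(\phi_f(a)) = \phi_g(a)$, where the product-system axioms enter through the fact that $m_{f, f^{-1}g}$ intertwines left actions; everything else is bookkeeping with the large-$g$ quantifier, which the existence of finite joins in $P(-, y)$ renders routine. I expect the subtlest part to be confirming that the candidate relations genuinely satisfy the requirement that the sum vanish for large $g$ (rather than merely for $g \geq f$), i.e.\ correctly matching the chosen upper bounds to the quantifier in the definition.
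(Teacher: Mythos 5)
Your proof is correct, and part (1) is essentially the paper's argument: both apply Cuntz--Pimsner covariance to the two-element family $\F = \{1_y, f\}$ with $T_{1_y} = -a$ and $T_f = \phi_f(a)$, after checking that $l^g_{1_y}(a) = l^g_f(\phi_f(a))$ for all $g \geq f$ in $P(-,y)$; the paper obtains this identity from the factorization $l^g_{1_y} = l^g_f \circ l^f_{1_y}$, while you derive it from the fact that $m_{f, f^{-1}g}$ intertwines the left actions --- both are valid, and yours is the more explicitly justified. In part (2) you take a genuinely shorter route. The paper first proves that $l^g_h$ is injective whenever $h \leq g$ (using injectivity of the left actions and a positivity argument on inner products), then uses transitivity to write $\sum_{f\in\F} l^g_f(T_f) = l^g_{h_0}\bigl(\sum_{f\in\F} l^{h_0}_f(T_f)\bigr)$ and invokes this injectivity to conclude $\sum_{f\in\F} l^{h_0}_f(T_f) = 0$ at $h_0 = \vee\F$, before finally applying \cref{lem:covariant_re} at $h_0$. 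You observe that this descent is unnecessary: the large-$g$ hypothesis already supplies a concrete $h \geq \vee\F$ at which $\sum_{f\in\F} l^h_f(T_f) = 0$, and \cref{lem:covariant_re} can be applied at that $h$ directly. Your version never needs injectivity of the maps $l^g_h$ nor their transitivity; what the paper's detour buys is the injectivity statement itself, a reusable fact, but it is not needed for the proposition. Your treatment of the large-$g$ quantifier (taking $h = f' \vee f$ for $f'$ with target $y$, and noting the statement is vacuous when $t(f') \neq y$) is also sound and, if anything, more careful than the paper's.
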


\begin{proof}


Since $\phi_f$ is injective for $f \in P(-, -)$, we have $\widetilde{X}_f = X_f$ and $\widetilde{l}^g_f(-)= l^g_f(-)$ for all $ g \in P(-, -)$.

    (1)  Let $f \in P(-, y)$ and $a \in \phi^{-1}_f(\CK(X_f))$. For every $h \in P(-, y)$ we have
  \begin{align*}
l^{g}_{1_y}(a) -l^{g}_f(\phi_f(a)) = l^g_{f}(l^{f}_{1_{y}}(a) -\phi_f(a)) = 0
\end{align*}
for every $g \geqslant f \vee h$, since  $l^{f}_{1_y}(a) = \phi_f(a)$. Therefore
\begin{align*}
\psi_{1_y}(a) -\psi^{(f)}(\phi_f(a)) = \psi^{(1_y)}(a) -\psi^{(f)}(\phi_f(a)) = 0,
\end{align*}
since $\psi$ is Cuntz-Pimsner covariant.

(2) Suppose $l^g_h(T) = 0$ for $T \in \CL(X_h)$, $g \geqslant h \in P(-, -)$. Then for $\xi \in X_h$, $\eta \in X_{h^{-1} g}$,
    \begin{align*}
 0 & =  \rin{\CA_{s(g)}}[  l^g_h(T)(\xi  \eta)][  l^g_h(T)(\xi \eta)]\\
 & =  \rin{\CA_{s(g)}}[(T \xi ) \eta][ (T \xi)  \eta] \\
 & = \rin{\CA_{s(g)}}[ \eta][ \rin{\CA_{s(h)}}[T \xi][T \xi] \eta].
    \end{align*}
    Then $\phi_h( \rin{\CA_{s(h)}}[T \xi][T \xi] )= 0$. Since $\phi_h$ is injective, $T= 0$. Thus $l^g_h$ is injective.     Let $\F$ be a finite subset of $P(-, y)$, and $T_f \in \CK(X_f)$ such that
 \begin{align*}
 \sum_{f \in \F}l^g_f (T_f)=0
\end{align*}
 for large $g \in P(-, y)$.
Then
\begin{align*}
l^g_{h_0}(\sum_{f \in \F}l^{h_0}_f(T_f) ) = 0
\end{align*}
for $g \geqslant h_0 = \vee \F$. Since $l^g_{h_0}$ is injective, we have
     $\sum_{f \in \F}l^{h_0}_f(T_f) = 0$. Then by \cref{lem:covariant_re}, we have
\begin{align*}
\sum_{f \in \F} \psi^{(f)}(T_f) = \psi^{(h_0)}(\sum_{f \in \F}l^{h_0}_f(T_f))  =0.
\end{align*}
    Thus $\psi$ is  Cuntz-Pimsner covariant.
\end{proof}

Let $\psi = \{ \psi_f : X_f \to \CB(\CH)\}_{f \in P(-, -)}$ be a Toeplitz representation of $X$, where $\CB(\CH)$ is the $C^*$-algebra of all  bounded operators on a Hilbert space $\CH$.  Proposition 2.5 in \cite{L95} implies that the $*$-homomorphism $\psi^{(f)}$ admits a strict-SOT continuous extension $\psi^{(f)} : \CL(X_f) \to \CB(\CH)$. It is obvious that  $\psi : X \to \CB(\CH)$ is Nica covariant if and only if
\begin{align*}
   \psi^{(f)}(I)\psi^{(g)}(I) =  \psi^{(f \vee g)}(I)
\end{align*}
for  $f, g \in P(-, -)$.

\begin{cor}\label{cor :_NNNOOX_relation_OOOX}
Suppose that each pair in $P(-, y)$ has a least upper bound for all $y \in \ob(P)$ and  left actions $\phi_f : \CA_{t(f)} \to \CL(X_f)$ are all  injective  with $\phi_f (\CA_{t(f)}) \subseteq \CK(X_f)$ for $ f \in P(-, -)$. Let $\psi : X \to \CB(\CH)$ be a  Toeplitz representation of $X$ such that $$\psi_{1_x}(\CA_x) \psi_{1_y}(\CA_y) = 0$$ for all $x \neq y \in \ob(P)$. Then $\psi$ is CNP-covariant if and only if $$\psi^{(f)} \circ \phi_f (-)= \psi_{1_{t(f)}}(-).$$
\end{cor}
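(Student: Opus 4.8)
The plan is to reduce both implications to \cref{prop:_CUntz_pimsner_covariant_relation}, which already settles the Cuntz-Pimsner part, and then to supply the missing Nica covariance by hand. For the forward implication, assume $\psi$ is CNP-covariant; then it is in particular Cuntz-Pimsner covariant, so \cref{prop:_CUntz_pimsner_covariant_relation}(1) yields $\psi^{(f)}(\phi_f(a)) = \psi_{1_{t(f)}}(a)$ for every $a \in \phi_f^{-1}(\CK(X_f))$. Since $\phi_f(\CA_{t(f)}) \subseteq \CK(X_f)$ by hypothesis, we have $\phi_f^{-1}(\CK(X_f)) = \CA_{t(f)}$, so the identity $\psi^{(f)} \circ \phi_f(-) = \psi_{1_{t(f)}}(-)$ holds on all of $\CA_{t(f)}$. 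Conversely, assuming this identity, \cref{prop:_CUntz_pimsner_covariant_relation}(2) immediately gives that $\psi$ is Cuntz-Pimsner covariant, so the only remaining task is to prove that $\psi$ is Nica covariant.

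For the Nica covariance I would use the criterion recalled just above the statement: $\psi$ is Nica covariant if and only if $\psi^{(f)}(I)\psi^{(g)}(I) = \psi^{(f \vee g)}(I)$ for all $f, g \in P(-, -)$, where $\psi^{(f)} : \CL(X_f) \to \CB(\CH)$ is the strict-SOT continuous extension. The key computation is to identify $\psi^{(f)}(I)$. Since each $\phi_f$ is non-degenerate, an approximate unit $(e_i)$ of $\CA_{t(f)}$ satisfies $\phi_f(e_i) \to I$ strictly in $\CL(X_f)$, and strict-SOT continuity of the extension then gives
\[
\psi^{(f)}(I) = \operatorname{SOT}\text{-}\lim_i \psi^{(f)}(\phi_f(e_i)) = \operatorname{SOT}\text{-}\lim_i \psi_{1_{t(f)}}(e_i) =: P_{t(f)}.
\]
Thus $\psi^{(f)}(I)$ depends only on the target $t(f)$, and $P_y$ is the range projection of the $*$-homomorphism $\psi_{1_y}$ (the strong limit of the increasing net $\psi_{1_y}(e_i)$), hence a genuine projection.

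With this in hand the two cases of the criterion are quick. If $t(f) = t(g) = y$, then $f \vee g$ exists by hypothesis and has target $y$, so $\psi^{(f)}(I)\psi^{(g)}(I) = P_y^2 = P_y = \psi^{(f \vee g)}(I)$. If $t(f) \neq t(g)$, then no element of $P$ can dominate both (an upper bound must share their common target), hence $f \vee g = \infty$, and by the convention $\psi^{\infty} \equiv 0$ I must show $\psi^{(f)}(I)\psi^{(g)}(I) = P_{t(f)}P_{t(g)} = 0$; this follows from the orthogonality hypothesis $\psi_{1_x}(\CA_x)\psi_{1_y}(\CA_y) = 0$ for $x \neq y$ by passing the relations $\psi_{1_{t(f)}}(e_i)\psi_{1_{t(g)}}(e'_j) = 0$ to the strong limits first in $i$ and then in $j$, using separate strong-operator continuity of multiplication on bounded nets.

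I expect the single genuinely substantive step to be the identification $\psi^{(f)}(I) = P_{t(f)}$, which is what converts the operator-algebraic Nica relation into the elementary statement that the range projections $P_y$ are idempotent within a fixed object and mutually orthogonal across distinct objects. The points requiring care are the justification of the strict-to-strong passage (guaranteed by the extension statement preceding the corollary together with non-degeneracy of $\phi_f$) and the separate strong-operator continuity used to push the orthogonality relations to the limit projections; everything else is bookkeeping through \cref{prop:_CUntz_pimsner_covariant_relation}.
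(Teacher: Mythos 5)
Your proof is correct, and its skeleton is the same as the paper's: both implications are funneled through \cref{prop:_CUntz_pimsner_covariant_relation} (with $\phi_f^{-1}(\CK(X_f))=\CA_{t(f)}$ handling the forward direction), and the converse reduces to verifying Nica covariance via the criterion $\psi^{(f)}(I)\psi^{(g)}(I)=\psi^{(f\vee g)}(I)$, by showing that $\psi^{(f)}(I)$ depends only on $t(f)$ and then treating the cases $t(f)=t(g)$ and $t(f)\neq t(g)$ (where $f\vee g=\infty$ and the orthogonality hypothesis kills the product) exactly as the paper does. The one step where you genuinely diverge is the identification $\psi^{(f)}(I)=\psi^{(1_{t(f)})}(I)$: the paper obtains it by proving equality of essential subspaces, $\overline{\mathrm{span}}\,\psi_f(X_f)\CH=\overline{\mathrm{span}}\,\psi_{1_{t(f)}}(\CA_{t(f)})\CH$, deducing one inclusion from \cite[Lemma 1.9]{FNMPR2003} (using $\phi_f(\CA_{t(f)})\subseteq\CK(X_f)$ and the assumed identity) and the reverse inclusion from non-degeneracy $\CA_{t(f)}\cdot X_f=X_f$ together with multiplicativity of $\psi$; you instead compute $\psi^{(f)}(I)$ directly as the SOT-limit of $\psi^{(f)}(\phi_f(e_i))=\psi_{1_{t(f)}}(e_i)$, using that non-degeneracy of $\phi_f$ forces $\phi_f(e_i)\to I$ strictly and that the extension of $\psi^{(f)}$ is strict-to-SOT continuous on bounded nets. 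Both mechanisms rest on the same ingredients (compact range of $\phi_f$, the identity $\psi^{(f)}\circ\phi_f=\psi_{1_{t(f)}}$, and non-degeneracy of the left action); yours is somewhat more self-contained, trading the citation of Fowler--Muhly--Raeburn for the strict-continuity fact the paper already invokes to define the extension, at the cost of the (correctly flagged) bookkeeping that this continuity is only available on bounded nets, while the paper's subspace argument avoids any limit arguments. Either way, the substantive conclusion --- that the projections $\psi^{(f)}(I)$ are constant over each object and mutually orthogonal across distinct objects --- is reached identically, so your argument is a correct, mildly streamlined variant of the paper's proof.
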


\begin{proof}
If  $\psi$ is CNP-covariant, then $\psi^{(f)} \circ \phi_f (-) = \psi_{1_{t(f)}}(-)$ by  \cref{prop:_CUntz_pimsner_covariant_relation} (1).
 Suppose $\psi^{(f)} \circ \phi_f(-) = \psi_{1_y}(-)$ for all $f \in P(x, y)$.  Let $$\psi_f(X_f) \CH := \overline{\rm{span}}\{ \psi_f(\xi_f) \eta: \xi_f \in X_f, \eta \in \CH
    \}.$$ Since $\phi_f (\CA_y) \subseteq \CK(X_f)$, we have  $\psi_{1_y} (\CA_y)\CH  \subseteq  \psi_f(X_f) \CH$ by \cite[Lemma 1.9]{FNMPR2003}. Note that $\CA_y\cdot X_f  = X_f$.
By the condition (2) in \cref{def:iso_rep_comp_alig_prod_system}, we have
   \begin{align*}
   \psi_f(X_f) \CH = \psi_f (\CA_y \cdot  X_f) \CH =\psi_{1_y}(\CA_y) \psi_f(X_f) \CH \subseteq \psi_{1_y}(\CA_y)\CH.
      \end{align*}
Therefore  $\psi_{1_y} (\CA_y)\CH =  \psi_f(X_f) \CH$ and $\psi^{(f)}(I) = \psi^{(1_y)}(I)$.
    Let $g \in P(-,-)$. Note $f \vee g < \infty$ if and only if $t(g) = y$. If $f \vee g < \infty $, then $$\psi^{(f)}(I)\psi^{(g)}(I) = \psi^{(1_y)}(I) = \psi^{(f\vee g)}(I).$$ If $t(g) \neq y$, then $\psi^{(f)}(I)\psi^{(g)}(I) = 0$ since $\psi_{1_y}(\CA_y) \psi_{1_t(g)}(\CA_{t(g)}) = 0$. Thus $\psi$ is Nica covariant. By \cref{prop:_CUntz_pimsner_covariant_relation} (2),  $\psi$ is CNP-covariant.
\end{proof}

Suppose $(G, P)$ is a quasi-lattice ordered groupoid such that each pair in $P(-, y)$ has a least upper bound for all $y \in \ob(P)$ and $G$ is generated by $P$. Let $\FB = \{ \FB_g\}_{g \in G(-, -)}$ be a Fell bundle over $G$ with each $\FB_g$ is a Morita equivalence bimodule.  Note that $\FB_g \FB_k = \FB_{gk}$ for $g, k \in G(-, -)$, $s(g) = t(k)$.  Then $$ \widehat{\FB} :=( \{\FB_x\}, \{ \FB_f\}_{f \in P(-, -)})$$ is a product system over $(G, P)$ with isomorphisms $\FB_f \otimes_{\FB_{s(f)}} \FB_h \cong \FB_{fh}$ corresponding to the multiplication in the Fell bundle $\FB$. It is obvious  that $\widehat{\FB}$ is a compactly aligned product system.

\begin{thm}
With notations as above. We have $C^*(\{ \FB_g\}_{g \in G(-, -)}) \cong \NNN\OOO_{\widehat{\FB}}$.
\end{thm}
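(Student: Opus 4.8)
The plan is to construct mutually inverse $*$-homomorphisms between $\NNN\OOO_{\widehat\FB}$ and $C^*(\FB)$, exploiting that for a Fell bundle of Morita equivalence bimodules each left action $\phi_f$ is an isomorphism onto $\CK(\FB_f)$, so that \cref{cor :_NNNOOX_relation_OOOX} applies. First I would build the map $\Phi\colon\NNN\OOO_{\widehat\FB}\to C^*(\FB)$. Let $\pi^u=\{\pi^u_g\}_{g\in G(-,-)}$ be the universal representation of $\FB$ and set $\psi_f:=\pi^u_f\colon\FB_f\to C^*(\FB)$ for $f\in P(-,-)$. The Fell-bundle relations $\pi^u_f(\xi)\pi^u_h(\eta)=\delta_{s(f),t(h)}\pi^u_{fh}(\xi\eta)$ and $\pi^u_f(\eta)^*\pi^u_f(\xi)=\pi^u_{1_{s(f)}}(\eta^*\xi)=\psi_{1_{s(f)}}(\rin{\FB_{s(f)}}[\eta][\xi])$ show $\psi=\{\psi_f\}$ is a Toeplitz representation of $\widehat\FB$. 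Since each $\FB_f$ is a Morita equivalence bimodule, $\phi_f$ is injective with $\phi_f(\FB_{t(f)})=\CK(\FB_f)$, and under the identification $\phi_f(\xi\eta^*)=\theta_{\xi,\eta}$ one computes $\psi^{(f)}(\phi_f(\xi\eta^*))=\pi^u_f(\xi)\pi^u_f(\eta)^*=\pi^u_{1_{t(f)}}(\xi\eta^*)=\psi_{1_{t(f)}}(\xi\eta^*)$, so $\psi^{(f)}\circ\phi_f=\psi_{1_{t(f)}}$. As $\psi_{1_x}(\FB_x)\psi_{1_y}(\FB_y)=0$ for $x\neq y$ (by \cref{rem:_some_p_of_N}) and each pair in $P(-,y)$ has a least upper bound, \cref{cor :_NNNOOX_relation_OOOX} shows $\psi$ is CNP-covariant, and the universal property of $(\NNN\OOO_{\widehat\FB},j)$ yields $\Phi$ with $\Phi\circ j_f=\pi^u_f$.

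The structural heart is a $G$-grading of $\NNN\OOO_{\widehat\FB}$. I would set
\[
(\NNN\OOO_{\widehat\FB})_g:=\overline{\spn}\{\,j_f(\xi)j_h(\eta)^*: f,h\in P(-,-),\ s(f)=s(h),\ fh^{-1}=g\,\},
\]
so that $\Phi(j_f(\xi)j_h(\eta)^*)=\pi^u_{fh^{-1}}(\xi\eta^*)\in\FB_g$, and verify that $\{(\NNN\OOO_{\widehat\FB})_g\}$ is a Fell bundle: that $\NNN\OOO_{\widehat\FB}=\overline{\spn}\bigcup_g(\NNN\OOO_{\widehat\FB})_g$ (from \cref{Nica_covariant_eq}), $(\NNN\OOO_{\widehat\FB})_g^*=(\NNN\OOO_{\widehat\FB})_{g^{-1}}$, and $(\NNN\OOO_{\widehat\FB})_g(\NNN\OOO_{\widehat\FB})_k\subseteq(\NNN\OOO_{\widehat\FB})_{gk}$, the last using \cref{rem:_some_p_of_N}(2) to rewrite $j_h(\eta)^*j_{f'}(\xi')$ and track degrees. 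The key point is that the Morita hypothesis \emph{collapses the unit fibres}: from $\phi_f(\FB_{t(f)})=\CK(\FB_f)$ and $j^{(f)}\circ\phi_f=j_{1_{t(f)}}$ we get $j^{(f)}(\CK(\FB_f))=j_{1_x}(\FB_x)$ for every $f\in P(-,x)$, whence $(\NNN\OOO_{\widehat\FB})_{1_x}=j_{1_x}(\FB_x)$. Since $j$ is injective, $\Phi(j_{1_x}(a))=\pi^u_{1_x}(a)$ is isometric on the unit fibre; then for $c\in(\NNN\OOO_{\widehat\FB})_g$ we have $c^*c\in(\NNN\OOO_{\widehat\FB})_{1_{s(g)}}$ and $\|\Phi(c)\|^2=\|\Phi(c^*c)\|=\|c^*c\|=\|c\|^2$, so $\Phi$ is isometric on every fibre. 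Because $\Phi((\NNN\OOO_{\widehat\FB})_g)\ni\pi^u_{fh^{-1}}(\xi\eta^*)$ and $\FB_g=\FB_f\FB_h^*$, each $\Phi_g:=\Phi|_{(\NNN\OOO_{\widehat\FB})_g}\colon(\NNN\OOO_{\widehat\FB})_g\to\FB_g$ is an isometric isomorphism.

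To invert $\Phi$, I first note that every $g\in G$ has the form $fh^{-1}$: since $G$ is generated by $P$ and any two elements of $P(-,z)$ admit a least upper bound, the groupoid analogue of the group-of-fractions argument replaces a factor $h^{-1}f$ (with $t(h)=t(f)$, so $w:=h\vee f$ exists) by $(h^{-1}w)(f^{-1}w)^{-1}\in PP^{-1}$, yielding $g=fh^{-1}$ with $f,h\in P$, $s(f)=s(h)$ and $\FB_g=\FB_f\FB_h^*$; in particular $\Phi$ is surjective. I would then define $\pi_g:=\Phi_g^{-1}\circ\pi^u_g\colon\FB_g\to\NNN\OOO_{\widehat\FB}$. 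Since $\Phi\circ\pi_g=\pi^u_g$, $\Phi$ is a $*$-homomorphism, and $\Phi$ is injective on each fibre $(\NNN\OOO_{\widehat\FB})_{fg}$, the family $\{\pi_g\}$ inherits $\pi_g(b)^*=\pi_{g^{-1}}(b^*)$ and $\pi_f(b_f)\pi_g(b_g)=\delta_{s(f),t(g)}\pi_{fg}(b_fb_g)$ (the $\delta$-case following from $j_{1_x}(\FB_x)j_{1_y}(\FB_y)=0$), so it is a representation of $\FB$. The universal property of $C^*(\FB)$ gives $\Psi:=\widehat\pi$ with $\Psi\circ\pi^u_g=\pi_g$. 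Then $\Phi\circ\Psi\circ\pi^u_g=\pi^u_g$ and $\Psi\circ\Phi\circ j_f=\Psi\circ\pi^u_f=\pi_f=\Phi_f^{-1}\circ\pi^u_f=j_f$, so $\Phi$ and $\Psi$ are mutually inverse and $C^*(\FB)\cong\NNN\OOO_{\widehat\FB}$.

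I expect the main obstacle to be verifying that $\{(\NNN\OOO_{\widehat\FB})_g\}$ is genuinely a Fell bundle — that the fibres are independent of the decomposition $g=fh^{-1}$ and compatible with multiplication and involution — since this is precisely where Nica covariance (\cref{rem:_some_p_of_N}) must be applied carefully to the products of spanning monomials. By contrast, the Morita assumption renders the faithfulness of $\Phi$ on fibres routine, because it forces the unit fibres to collapse to $j_{1_x}(\FB_x)$.
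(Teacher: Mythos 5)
Your proposal is correct, and its first half (showing $\{\pi^u_f\}_{f\in P(-,-)}$ is CNP-covariant via \cref{cor :_NNNOOX_relation_OOOX} and invoking the universal property of $j$ to get $\Phi$) is exactly the paper's argument. Where you genuinely diverge is the construction of the inverse. The paper never introduces the grading $\{(\NNN\OOO_{\widehat{\FB}})_g\}_{g\in G(-,-)}$: it defines a candidate representation of $\FB$ directly by the formula $\pi_g(\xi_f\eta_h^*):=j_f(\xi_f)j_h(\eta_h)^*$ for $g=fh^{-1}$, and verifies the representation axioms by explicit computation --- multiplicativity $\pi_g(-)\pi_k(-)=\pi_{gk}(-)$ is checked on elements of the form $\lin{\FB_{t(k)}}[\beta][\zeta]\,\xi'\eta'^*$ and extended using density of $\lin{\FB_{t(k)}}[\FB_{h_1\vee f_2}][\FB_{h_1\vee f_2}]\FB_k$ in $\FB_k$, together with the Toeplitz relations of $j$ and \cref{rem:_some_p_of_N}. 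Your route buys something the paper's does not: defining $\pi_g:=\Phi_g^{-1}\circ\pi^u_g$ makes linearity and well-definedness of $\pi_g$ automatic, whereas the paper's formula is prescribed only on factorizable elements $\xi_f\eta_h^*$, and its well-definedness (an element of $\FB_g$ admits many such decompositions) is left implicit --- your fibre-wise injectivity of $\Phi$ is precisely what repairs that point. You also supply the Ore-type argument for writing $g=fh^{-1}$, which the paper merely asserts. The cost is that you must establish the multiplicativity of the grading and the fibre-wise isometry, which repackages essentially the same Nica-covariance computations the paper performs directly.

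One justification does need repair. You write that ``since $j$ is injective, $\Phi(j_{1_x}(a))=\pi^u_{1_x}(a)$ is isometric on the unit fibre.'' Injectivity of $j$ gives $\|j_{1_x}(a)\|=\|a\|$, but isometry (equivalently injectivity) of $\Phi$ on $(\NNN\OOO_{\widehat{\FB}})_{1_x}=j_{1_x}(\FB_x)$ is the statement $\|\pi^u_{1_x}(a)\|=\|a\|$, i.e.\ that the \emph{universal} representation of the Fell bundle is injective on unit fibres. This is a property of $C^*(\FB)$, not a consequence of anything about $j$. The fact is true and standard for Fell bundles over discrete groupoids (the regular representation is faithful on fibres and factors through $C^*(\FB)$; it is available from the companion paper cited as [Feifei]), and your entire inverse construction hinges on it, so it must be invoked explicitly rather than attributed to injectivity of $j$. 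With that fact in place, the remaining steps --- the collapse $(\NNN\OOO_{\widehat{\FB}})_{1_x}=j_{1_x}(\FB_x)$ from $j^{(f)}\circ\phi_f=j_{1_{t(f)}}$ and surjectivity of $\phi_f$ onto $\CK(\FB_f)$, the $C^*$-identity argument extending isometry to all fibres, and the transfer of the representation axioms through the fibre-wise injective homomorphism $\Phi$ --- all go through.
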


\begin{proof}
Let $\pi^u = \{\pi^u_g\}_{g \in G(-, -)}$ be the universal representation of the Fell bundle $\FB$.
By the definition of  representations of Fell bundles,
we see that $\{ \pi^u_f : \FB_f \to C^*(\{ \FB_g\}_{g \in G(-, -)}) \}_{f \in P(-, -)}$ is a Toeplitz representation of the compactly aligned product system $ \widehat{\FB}$ in $C^*(\{ \FB_g\}_{g \in G(-, -)}) $.

For each $f \in P(-, -)$, we see that $\phi_f : \FB_{t(f)} \to \CK(\FB_f)$ is an isomorphism which sends $\lin{\FB_{t(f)}}[\xi_f][\eta_f]$ to $\theta_{\xi_f, \eta_f}$ for $ \xi_f, \eta_f \in \FB_f$.
 Then by the definition of representations of the Fell bundles, we have
\begin{align*}
(\pi^{u})^{(f)} (\phi_f (\sum_i \lin{\FB_{t(f)}}[\xi_i][\eta_i]))
& = (\pi^{u})^{(f)} (\sum_i \theta_{\xi_i, \eta_i})\\
& = \sum_i \pi^u_f(\xi_i) \pi^u_f(\eta_i)^*\\
& = \sum_i  \pi^u_{1_{t(f)}}(\xi_i \eta^*_i)\\
& = \pi^u_{1_{t(f)}}(\sum_i \lin{\FB_{t(f)}}[\xi_i][\eta_i]),
\end{align*}
where $\sum_i \lin{\FB_{t(f)}}[\xi_i][\eta_i]$ is a finite sum of  $\FB_{t(f)}$ with $\xi_i, \eta_i \in \FB_f$.
Since $\lin{\FB_{t(f)}}[\FB_f][\FB_f]$ is dense in $\FB_{t(f)}$, we have $$(\pi^{u})^{(f)}\circ \phi_f(-) = \pi^u_{1_{t(f)}}(-).$$
It is easy to see that
\begin{align*}
\pi^u_{1_x}(\FB_x) \pi^u_{1_y}(\FB_y) =0
\end{align*}
 for $x \neq y \in \ob(P)$.
By \cref{cor :_NNNOOX_relation_OOOX}, $\{ \pi^u_f\}_{f \in P(-, -)}$ is a CNP-covariant Toeplitz representation of $\widehat{\FB}$. Recall that $(\NNN\OOO_
{\widehat{\FB}}, j)$ is the Cuntz-Nica-Pimsner algebra of $\widehat{\FB}$. By the universal property of $j$, there is a $*$-homomorphism $\phi_1$ such that $\phi_1 \circ j_f = \pi^u_f$ for $f \in P(-, -)$.

Since $G$ is generated by $P$, for each $g \in G(-, -)$ there exist $f, h \in P(-, -)$  such that $\FB_g = \FB_f \FB_h^*$. Let $\pi_g : \FB_g \to \NNN\OOO_{\widehat{\FB}} $ be a linear map defined by
$$\pi_g(\xi_f \eta_h^*) := j_f(\xi_f) j_h(\eta_h)^*$$ for $\xi_f \in \FB_f$, $\eta_h \in \FB_h$, $f, h \in P(-, -)$ with $fh^{-1} = g$.  We see that  $$\pi_g(\xi_f \eta_h^*)^* = j_h(\eta_h) j_f(\xi_f)^* = \pi_{g^{-1}}(\eta_h \xi_f^*).$$ Then $\pi_g(\xi)^* = \pi_{g^{-1}}(\xi^*)$ for $\xi \in \FB_g$.

If $f \leq h \in P(-, -)$, since $j$ is a Toeplitz representation of $\widehat{\FB}$, we have
\begin{align*}
j_f(\xi_f)^* j_h(\eta_f \zeta_{f^{-1}h}) &= j_f(\xi_f)^* j_f(\eta_f) j_{f^{-1}h}(\zeta_{f^{-1}h}) \\
&= j_{1_{s(f)}}(\rin{\FB_{s(f)}}[\xi_f][\eta_f]) j_{f^{-1}h}(\zeta_{f^{-1}h}) \\
&= j_{f^{-1}h}(\xi_f^* \eta_f \zeta_{f^{-1}h})
\end{align*}
for all $\xi_f, \eta_f \in \FB_f$, $\zeta_{f^{-1}h} \in \FB_{f^{-1}h}$. It follows that $j_f(\xi_f)^* j_h(\eta_h)= j_{f^{-1}h}(\xi_f^* \eta_h)$ for  $\eta_{h} \in \FB_h$.
For $g, k \in G(-, -)$ with $s(g) = t(k)$, we have
\begin{align*}
&\pi_g(\xi_{f_1} \eta^*_{h_1}) \pi_k(\lin{\FB_{t(k)}}[\beta_{h_1\vee f_2}][\zeta_{h_1\vee f_2}]\xi'_{f_2} \eta'^{*}_{h_2})\\
& = j_{f_1}(\xi_{f_1}) j_{h_1}(\eta_{h_1})^* j_{1_{t(k)}} (\lin{t(k)}[\beta_{h_1\vee f_2}][\zeta_{h_1\vee f_2}])j_{f_2} (\xi'_{f_2}) j_{h_2}(\eta'_{h_2})^*\\
& =  j_{f_1}(\xi_{f_1}) j_{h_1}(\eta_{h_1})^*j_{h_1\vee f_2}(\beta_{h_1\vee f_2}) j_{h_1\vee f_2}(\zeta_{h_1\vee f_2})^*j_{f_2} (\xi'_{f_2}) j_{h_2}(\eta'_{h_2})^*\\
& = j_{f_1h^{-1}_1 (h_1\vee f_2)}(\xi_{f_1} \eta^*_{h_1} \beta_{h_1\vee f_2}) j_{h_2f^{-1}_2 (h_1\vee f_2)}((\eta'_{h_2}\xi'^*_{f_2} \zeta_{h_1\vee f_2})^*\\
& = \pi_{gk}(\xi_{f_1} \eta^*_{h_1} \beta_{h_1\vee f_2}  \zeta_{h_1\vee f_2}^*  \xi'_{f_2} \eta'^{*}_{h_2})
\end{align*}
where $f_1, h_1, f_2 , h_2 \in P(-, -)$ with $f_1h^{-1}_1 = g$, $f_2h^{-1}_2 = k$, $\xi_{f_1} \in \FB_{f_1}$, $\eta_{h_1} \in \FB_{h_1}$, $\beta_{h_1\vee f_2}, \zeta_{h_1\vee f_2} \in \FB_{h_1\vee f_2}$, $\xi'_{f_2} \in \FB_{f_2}$, $\eta'_{h_2} \in \FB_{h_2}$. Since $\lin{\FB_{t(k)}}[\FB_{h_1\vee f_2}][\FB_{h_1\vee f_2}] \FB_{k}$ is dense in $\FB_{k}$, we have $\pi_g(-) \pi_k(-) = \pi_{gk}(-)$.
If $s(g) \neq t(k)$, by condition (1) in \cref{rem:_some_p_of_N}, we have $\pi_g(-) \pi_k(-)  = 0$ since $j$ is a Nica covariant Toeplitz representation of $\widehat{\FB}$.  Therefore, $\pi = \{ \pi_g\}_{g \in G(-, -)}$ is a representation of the Fell bundle $\FB$.
Then  there is a $*$-homomorphism $$\phi_2 : C^*(\{ \FB_g\}_{g \in G(-, -)}) \to \NNN\OOO_{\widehat{\FB}}$$ such that $\phi_2 \circ \pi^u_g(-) = \pi_g(-)$.
It is obvious that $\phi_1$ and $\phi_2$ are mutually reversible. Then $$C^*(\{\FB_g\}_{g \in G(-, -)}) \cong \NNN\OOO_{\widehat{\FB}}.$$
This completes the proof. 
\end{proof}

\bibliographystyle{amsplain}

\begin{thebibliography}{99}

\bibitem{MR2837016}
T. M. Carlsen, N. S. Larsen, A. Sims, and S. T. Vittadello, \emph{Co-universal
  algebras associated to product systems, and gauge-invariant uniqueness
  theorems}, Proc. Lond. Math. Soc. (3)  \textbf{103} (2011), no. 4, 563--600.

\bibitem{MR467330}
J. Cuntz, \emph{Simple {$C\sp*$}-algebras generated by isometries}, Comm. Math.
  Phys. \textbf{57} (1977), no. 2, 173--185.

\bibitem{MR561974}
J. Cuntz and W. Krieger, \emph{A class of {$C^{\ast} $}-algebras and
  topological {M}arkov chains}, Invent. Math. \textbf{56} (1980), no. 3,
  251--268.

\bibitem{MR4053621}
A. Dor-On and E. Katsoulis, \emph{Tensor algebras of product systems and their
  {${\rm C}^*$}-envelopes}, J. Funct. Anal. \textbf{278} (2020), no. 7, 108416,
  32pp.

 \bibitem{Li} A. Dor-On,  E. T. A. Kakariadis, E. Katsoulis,  M. Laca and X.  Li,
C$^{\ast}$-envelopes for operator algebras with a coaction and co-universal C$^{\ast}$-algebras for product systems.
Adv. Math. \textbf{400} (2022), Paper No. 108286, 40 pp.

  \bibitem{MR936628}
J. M. G. Fell and R. S. Doran, \emph{Representations of {$^*$}-algebras,
  locally compact groups, and {B}anach {$^*$}-algebraic bundles. {V}ol. 1},
  Pure and Applied Mathematics, vol. 125, Academic Press, Inc., Boston, MA,
  1988.


\bibitem{FowlerN2002}
N. J. Fowler, \emph{Discrete product systems of {H}ilbert bimodules}, Pacific
  J. Math. \textbf{204} (2002), no. 2, 335--375.

\bibitem{MR1722197}
N. J. Fowler and I. Raeburn, \emph{The {T}oeplitz algebra of a {H}ilbert
  bimodule}, Indiana Univ. Math. J. \textbf{48} (1999), no. 1, 155--181.

\bibitem{MR1658088}
T. Kajiwara, C. Pinzari, and Y. Watatani, \emph{Ideal structure and simplicity
  of the {$C^\ast$}-algebras generated by {H}ilbert bimodules}, J. Funct. Anal.
  \textbf{159} (1998), no. 2, 295--322.

\bibitem{MR2102572}
T. Katsura, \emph{On {$C^*$}-algebras associated with {$C^*$}-correspondences},
  J. Funct. Anal. \textbf{217} (2004), no. 2, 366--401.

\bibitem{Kumjian1998}
A. Kumjian, \emph{{F}ell bundles over groupoids}, Proc. Amer. Math. Soc.
  \textbf{126} (1998), no. 4, 1115--1125.

\bibitem{MR1402771}
M. Laca and I. Raeburn, \emph{Semigroup crossed products and the {T}oeplitz
  algebras of nonabelian groups}, J. Funct. Anal. \textbf{139} (1996), no. 2,
  415--440.

\bibitem{L95}
E. C. Lance, \emph{Hilbert {$C^\ast$}-modules, a toolkit for operator
  algebraists}, London Mathematical Society Lecture Note Series, vol. 210,
  Cambridge University Press, Cambridge, 1995.

\bibitem{Feifei}
Feifei Miao, Liguang Wang, and Wei Yuan, \emph{Co-universal {$C^*$}-algebras
  for product systems over quasi-lattice ordered groupoids}, arXiv: 2201.08198.


\bibitem{FNMPR2003}
I. Raeburn N. J. Fowler, P. S. Muhly, \emph{Representations of
  {C}untz-{P}imsner algebras}, Indiana Univ. Math. J. \textbf{52} (2003),
  no. 3, 569--605.

\bibitem{MR1241114}
A. Nica, \emph{{$C^*$}-algebras generated by isometries and {W}iener-{H}opf
  operators}, J. Operator Theory \textbf{27} (1992), no. 1, 17--52.

\bibitem{MR1426840}
M. V. Pimsner, \emph{A class of {$C^*$}-algebras generalizing both
  {C}untz-{K}rieger algebras and crossed products by {${\Zb}$}}, Free
  probability theory ({W}aterloo, {ON}, 1995), Fields Inst. Commun., vol. 12,
  Amer. Math. Soc., Providence, RI, 1997, pp. 189--212.

\bibitem{RS} S.  Roman, \emph{An introduction to the language of category theory}. Compact Textbooks in Mathematics. Birkh$\ddot{a}$user/Springer, Cham, 2017. xii+169 pp.

\bibitem{Sims20101}
A. Sims, G. Szab$\acute{o}$, and D. P. Williams, \emph{Operator algebras and dynamics:
  groupoids, crossed products, and {R}okhlin dimension}, Advanced Courses in
  Mathematics. CRM Barcelona, Birkh\"{a}user/Springer, Cham, 2020.

\bibitem{Sims2010}
A. Sims and T. Yeend, \emph{{$C^*$}-algebras associated to product systems of
  {H}ilbert bimodules}, J. Operator Theory \textbf{64} (2010), no. 2, 349--376.

\end{thebibliography}

\end{document}